\theoremstyle{plain}
\newtheorem{pretheo}{Theorem}[section]
\newtheorem{preassu}[pretheo]{Assumption}
\newtheorem{precoro}[pretheo]{Corollary}
\newtheorem{predefi}[pretheo]{Definition}
\newtheorem{preexam}[pretheo]{Example}
\newtheorem{prelemm}[pretheo]{Lemma}
\newtheorem{preprop}[pretheo]{Proposition}
\newtheorem{prerema}[pretheo]{Remark}
\newtheorem{preremas}[pretheo]{Remarks}
\newenvironment{theo}{\begin{pretheo}}{\end{pretheo}}
\newenvironment{coro}{\begin{precoro}}{\end{precoro}}
\newenvironment{lemm}{\begin{prelemm}}{\end{prelemm}}
\newenvironment{prop}{\begin{preprop}}{\end{preprop}}
\newenvironment{rema}{\begin{prerema}\rm}{\end{prerema}}
\newenvironment{remarks}{\begin{preremas}\rm}{\end{preremas}}
\DeclareMathOperator{\di}{div}
\newcommand{\jump}[1]{\ensuremath{[\![#1]\!]}}
\newcommand{\pa}{\partial}
\newcommand{\pr}{\prime}
\newcommand{\wt}[1]{\widetilde{#1}}
\newcommand{\opnorm}{\@ifstar\@opnorms\@opnorm}
\newcommand{\@opnorms}[1]{%
  \left|\mkern-1.5mu\left|\mkern-1.5mu\left|
   #1
  \right|\mkern-1.5mu\right|\mkern-1.5mu\right|
}
\newcommand{\@opnorm}[2][]{%
  \mathopen{#1|\mkern-1.5mu#1|\mkern-1.5mu#1|}
  #2
  \mathclose{#1|\mkern-1.5mu#1|\mkern-1.5mu#1|}
}
\newcommand{\BBE}{\mathbb{E}}
\newcommand{\BBF}{\mathbb{F}}
\newcommand{\BBI}{\mathbb{I}}
\newcommand{\BBN}{\mathbb{N}}
\newcommand{\BBR}{\mathbb{R}}
\newcommand{\R}{\mathbb{R}}
\newcommand{\Be}{e}
\newcommand{\Bf}{f}
\newcommand{\Bg}{g}
\newcommand{\Bn}{n}
\newcommand{\Bu}{u}
\newcommand{\Bv}{v}
\newcommand{\Bz}{\mathbf{z}}
\newcommand{\BA}{A}
\newcommand{\BD}{D}
\newcommand{\BE}{E}
\newcommand{\BF}{F}
\newcommand{\BG}{G}
\newcommand{\BI}{I}
\newcommand{\BN}{N}
\newcommand{\BR}{\mathbf{R}}
\newcommand{\BT}{T}
\newcommand{\BV}{V}
\newcommand{\CA}{\mathcal{A}}
\newcommand{\CB}{\mathcal{B}}
\newcommand{\CE}{\mathcal{E}}
\newcommand{\CF}{\mathcal{F}}
\newcommand{\CH}{\mathcal{H}}
\newcommand{\CJ}{\mathcal{J}}
\newcommand{\CL}{\mathcal{L}}
\newcommand{\bdry}{\mathbb{R}_{0}^{N}}
\newcommand{\dws}{\dot{\mathbb{R}}^{N}}
\newcommand{\lhs}{\mathbb{R}_{-}^{N}}
\newcommand{\uhs}{\mathbb{R}_{+}^{N}}
\newcommand{\tws}{\mathbb{R}^{N-1}}
\newcommand{\ws}{\mathbb{R}^{N}}
\newcommand{\al}{\alpha}
\newcommand{\ga}{\gamma}
\newcommand{\de}{\delta}
\newcommand{\ep}{\varepsilon}
\newcommand{\te}{\theta}
\newcommand{\si}{\sigma}
\newcommand{\ph}{\varphi}
\newcommand{\Ga}{\Gamma}
\newcommand{\De}{\Delta}
\newcommand{\Te}{\Theta}
\newcommand{\La}{\Lambda}
\newcommand{\Om}{\Omega}
\newcommand{\Btau}{\tau}
\newcommand{\BCA}{\CA}
\newcommand{\BCB}{\CB}
\begin{document}

\title[Strong Solutions to two-phase free boundary problems]{Strong Solutions for two-phase free boundary problems  for a class of Non-Newtonian fluids}

\author[Matthias Hieber]{Matthias Hieber}
\address{Department of Mathematics, TU Darmstadt, Schlossgartenstr. 7, 64289 Darmstadt, Germany}
\email{hieber@mathematik.tu-darmstadt.de}

\author[Hirokazu Saito]{Hirokazu Saito}
\address{Department of Pure and Applied Mathematics, Graduate School of Fundamental Science and Engineering,
	Waseda University, Okubo 3-4-1, Shinjuku-ku, Tokyo 169-8555, Japan}
\email{s-hirokazu0704@toki.waseda.jp}

\subjclass[2000]{Primary: 35Q35; Secondary: 76D45.}

\keywords{Two-phase free boundary problems, Non-Newtonian fluids, strong solutions, surface tension}

\thanks{This work was supported by the Japanese-German Graduate Externship JGGE. The second author is a JSPS Research Fellow.}

\date{}

\dedicatory{}

\begin{abstract}
Consider the two-phase free boundary problem subject to surface tension
and gravitational forces for a class of non-Newtonian fluids with  stress tensors $\BT_i$
of the form $\BT_i=-\pi\BI+\mu_i(|\BD(\Bv)|^2)\BD(\Bv)$  for $i=1,2$, respectively, and 
where the viscosity functions $\mu_i$ satisfy  $\mu_i(s)\in C^3([0,\infty))$ and $\mu_i(0)>0$ for $i=1,2$. 
It is shown that for given $T>0$ this problem admits a unique, strong solution on $(0,T)$ 
provided the initial data are sufficiently small in their natural norms.  
\end{abstract}

\maketitle
\renewcommand{\thefootnote}{\arabic{footnote})}
\numberwithin{equation}{section}

\section{Introduction and main result}\label{sec:intro}

The free boundary problem for two-phase flows for Newtonian fluids with or without surface tension is nowadays rather well understood. We refer in particular to the articles 
\cite{P-S1}, \cite{P-S2}, \cite{P-S3}, \cite{PSSS}, \cite{PSZ}, \cite{Abels}, \cite{AR}, \cite{AL} and \cite{SS07} describing the present state of research for the situation of 
sharp interfaces. 

In order to describe  the problem in more detail, let $N\geq 2$ and $\Gamma_0 \subset \ws$ be a surface which separates a region $\Omega_1(0)$ filled with 
a viscous, incompressible fluid from $\Omega_2(0)$, the complement of $\overline{\Omega_1(0)}$
in $\ws$. The region $\Omega_2(0)$ is also occupied with a second incompressible, viscous fluid
and it is assumed that the two fluids are immiscible. Denoting  by $\Gamma(t)$ the position
of $\Gamma_0$ at time $t$, $\Gamma(t)$ is then the interface separating the two fluids 
occupying the regions $\Omega_1(t)$ and $\Omega_2(t)$. 

An incompressible fluid is subject to the set of equations
\begin{eqnarray*}
\rho(\pa_t\Bu + \Bu\cdot\nabla\Bu) &=& \di\BT, \\
                \di\Bu &=& 0,
\end{eqnarray*}
where $\rho$ denotes the density of the fluid and the stress tensor $\BT$ can be decomposed as $\BT = \Btau - p\BI$, where $p$ denotes the pressure $p$ and 
$\Btau$ the tangential part of the stress tensor of the fluid.
For a Newtonian fluid, $\Btau$ is given by $\Btau = 2 \mu \BD(\Bu)$,
where $\BD(\Bu)= [\nabla\Bu + (\nabla\Bu)^T]/2$ denotes  the deformation tensor
and $\mu$ the viscosity coefficient of the fluid.  

In this article  we consider  a class of non-Newtonian fluids, where $\Btau$ as above is replaced by 
$$ 
\Btau = 2 \mu(|\BD(\Bu)|^2)\BD(\Bu)
$$ 
for some  function $\mu$ satisfying 
\begin{equation}\label{viscosity}
\mu \in C^3([0,\infty))\quad \text{ and }\quad \mu(0)>0.
\end{equation}
In the special case of power law fluids, one has
$$
\mu(|\BD(\Bu)|^2) = \nu + \beta |\BD(\Bu)|^{d-2} 
$$
for some $d\geq 1$ and constants $\nu, \beta \geq 0$. If $d<2$, the fluid is then called a shear thinning fluid, if $d>2$ it is called a shear thickening fluid.
Fluids of this type are special cases of so called {\it Stokesian fluids}, which were investigated analytically for fixed domains by Amann in \cite{Ama94} and \cite{Ama96}.   


The motion of the two immiscible, incompressible and viscous fluids is then governed by the set of equations  
\begin{equation}\label{NS}
	\left\{\begin{aligned}
		\rho(\pa_{t}\Bv+\Bv\cdot\nabla\Bv)
			&=\di\BT-\rho\ga_a\Be_N, && \text{in $\Om(t)$,} \\
		\di\Bv
			&=0 && \text{in $\Om(t)$,} \\
		-\jump{\BT\Bn_\Ga}
			&=\si H_\Ga\Bn_\Ga && \text{on $\Ga(t)$,} \\
		\jump{\Bv}
			&=0 && \text{on $\Ga(t)$,} \\
		\BV_\Ga
			&=\Bv\cdot\Bn_\Ga && \text{on $\Ga(t)$,} \\
		\Bv|_{t=0}
			&=\Bv_0 && \text{in $\Om_0$,} \\
		\Ga|_{t=0}
			&=\Ga_0,
	\end{aligned}\right.
\end{equation}
where $\Om(t)=\Om_1(t)\cup\Om_2(t)$ and $\Be_N=(0,\dots,0,1)^T$.
Here, the normal field  on $\Gamma(t)$, pointing from $\Omega_1(t)$ into $\Omega_2(t)$, is denoted by $\Bn_{\Ga}(t,\cdot)$.
Moreover, $\BV_\Ga(t,\cdot)$ and $H_\Ga(t,\cdot)$ denote the normal velocity and mean curvature of $\Gamma(t)$, respectively. 
Furthermore, $\ga_a$ denotes the gravitational acceleration and $\si$ the coefficient of the surface tension.  

In this article we suppose that the stress tensor $\BT$ is given by the generalized Newtonian type described above,
that is, for given scalar functions $\mu_1,\mu_2:[0,\infty) \to \R$, the stress tensor 
$\BT$ is given by 
\begin{equation*}
	\BT
		=\chi_{\Om_1(t)}\BT_1(\Bv,\pi)+\chi_{\Om_2(t)}\BT_2(\Bv,\pi),\quad
	\BT_i(\Bv,\pi)
		=-\pi\BI+2\mu_{i}(|\BD(\Bv)|^2)\BD(\Bv), \quad i=1,2, 
\end{equation*}
and where $|\BD(\Bu)|^2=\sum_{i,j=1}^N(D_{ij}(\Bu))^2$.
The function $\chi_D$ denotes the indicator function of a set $D \subset \ws$, and
the density $\rho$ is defined by $\rho:=\chi_{\Om_1(t)}\rho_1+\chi_{\Om_2(t)}\rho_2$ for the densities $\rho_i>0$ of the $i$-th fluid.
The system is complemented by  the initial fluid velocity  $\Bv_0$, the given initial height function $h_0$ and $\Om_0$
as well as $\Ga_0$ which are given by
\begin{equation*}
	\Om_0=\ws\setminus\Ga_0,\quad
	\Ga_0=\{(x^\pr,x_N) \mid x^\pr\in\tws,\ x_N=h_0(x^\pr)\}.
\end{equation*}
In addition, we denote the unit normal field on $\Ga_0$ by $\Bn_0$. The quantity $\jump{f}=\jump{f}(x,t)$ is the jump of the quantity $f$,
which is defined on $\Om(t)$, across the interface $\Ga(t)$ as 
\begin{equation*}
\jump{f}(x,t) =\lim_{\ep\to0+}\{f(x+\ep\Bn_\Ga,t)-f(x-\ep\Bn_\Ga,t)\}\quad\text{for $x\in\Ga(t)$}.
\end{equation*}
The problem then is to find functions  $\Bv$, $\pi$ and $\Gamma$ solving equation \eqref{NS}.  

Well-posedness results for the above system \eqref{NS} in the case of  {\it Newtonian fluids}
and in the special case of one-phase flows with or without surface tension were first obtained by  Solonnikov  
\cite{Sol87}, \cite{Sol99}, \cite{Sol04}, Shibata and Shimizu \cite{S-S}, \cite{SS07}.
The case of an ocean of infinite extend and which is bounded below by a solid surface and bounded above
by a free surface was treated by  Beale \cite{Bea84}, Allain \cite{All87}, Tani \cite{Tan96},
Tani and Tanaka \cite{TT95}, Bae \cite{Bae11}, and Denk, Geissert, Hieber, Saal and Sawada 
\cite{DGH} and G\"otz \cite{Dario}. 

Besides the articles cited already above, the two-phase problem for Newtonian fluids was studied   by Denisova in \cite{Den91} and \cite{Den94},
and by Tanaka in \cite{Tan95} using {\it Lagrangian coordinates}.
Indeed, Denisova  proved local wellposedness in the Newtonian case in $W^{r,r/2}_2$ for $r \in (5/2,3)$
for the case that one of the domains is bounded
and Tanaka obtained wellposedness (including thermo-capillary convection)
in $W^{r,r/2}_2$ for $r \in (7/2,4)$. 
 
Pr\"uss and Simonett were using in  \cite{P-S3}, \cite{P-S1} and \cite{P-S2} a different approach
by transforming problem \eqref{NS} to a problem on a fixed domain via the {\it Hanzawa transform}, which then was 
followed then by an optimal regularity approach for the linearized equations.
Like this they proved wellposedness of the above problem in the case of Newtonian fluids. 

For an approach to the linearized problem based on Lagrangian coordinates , also in the setting of Newtonian fluids,
we refer to the work of Shibata and Shimizu \cite{S-S1}.

Problems of the above kind for {\it non-Newtonian fluids} were treated by Abels in \cite{Abels}
in the context of measure-valued varifold solutions.  His result covers in particular 
the situation where  $\mu_i(s)=\nu_i s^{(d-2)/2}$ for $i=1,2$ and $d\in(1,\infty)$.
Note, however, that his approach  does not give the uniqueness of a solution. For further results we refer also to the work of Abels, Dienig and 
Terasawa in \cite{ADT}. G\"otz investigated in \cite{Dario} the spin-coating process for generalized Newtonian fluids
and showed local wellposedness of this problem for the setting of one-phase flows.

Bothe and Pr\"uss gave in \cite{B-P} a local wellposedness result for Non-Newtonian fluids on fixed domains for viscosity functions 
$\mu \in C^{1}(0,\infty)$ satisfying $\mu(s)>0$ and $\mu(s)+2\mu'(s)>0$ for $s\geq 0$.
Note that our assumptions on the viscosity function $\mu$ are different from those treated in \cite{B-P}.
For further results on various  classes of non-Newtonian fluids on fixed domains we refer e.g. to
the articles \cite{D-R}, \cite{FMS03}, \cite{MNR} and \cite{Pr-R}.   

In our main result we show that system \eqref{NS} admits a unique, strong solution on $(0,T)$
for arbitrary $T>0$ provided the viscosity functions $\mu_i$ fulfill 
\eqref{viscosity} and the initial data are sufficiently small in their natural norms.  More precisely, we have the following result. 

\begin{theo}\label{thm:main}
Let $N+2<p<\infty$ and $J=(0,T)$ for some $T>0$.
Suppose that $\rho_1>0,\rho_2>0,\ga_a\geq0,\si>0$ and that 
$$
\mu_i \in C^3([0,\infty)) \quad  \text{and}\quad \mu_i(0)>0, \quad i=1,2.
$$
Then there exists $\ep_0>0$ such that for 
\begin{equation*}
(\Bv_0,h_0) \in W_p^{2-2/p}(\Om_0)^N\times W_p^{3-2/p}(\tws)
\end{equation*}
satisfying  the compatibility conditions
\begin{align*}
\jump{\mu(|\BD(\Bv_0)|^2)\BD(\Bv_0)\Bn_0-\{\Bn_0\cdot\mu(|\BD(\Bv_0)|^2)\BD(\Bv_0)\Bn_0\}\Bn_0} &=0\quad\text{on $\Ga_0$}, \\
\di\Bv_0=0\quad\text{in $\Om_0$},\quad
\jump{\Bv_0} &=0\quad \text{on $\Ga_0$},
\end{align*}
as well as the smallness condition
\begin{equation*}
		\|\Bv_0\|_{W_p^{2-2/p}(\Om_0)^N}+\|h_0\|_{W_p^{3-2/p}(\tws)}<\ep_0,
\end{equation*}
the  system  \eqref{NS} admits a unique solution $(\Bv,\pi,h)$ within the class 
\begin{align*}
\Bv &\in  H_p^1(J,L_p(\Om(t)) \cap L_p(J,H_p^2(\Om(t)))^N, \\
\pi &\in  L_p(J,\dot{H}_p^1(\Om(t))), \\
h &\in   W_p^{2-1/(2p)}(J,L_p(\tws))\cap H_p^1(J,W_p^{2-1/p}(\tws)) \\
&\quad \cap W_p^{1/2-1/(2p)}(J,H_p^2(\tws))\cap L_p(J,W_p^{3-1/p}(\tws)).
\end{align*}
\end{theo}

\begin{remarks}
a) 
Some remarks on notation are in order at this point. Setting   
\begin{equation*}
\dws=\ws\setminus\bdry,\quad\bdry=\{(x^\pr,x_N)\mid x^\pr\in\tws,\ x_N=0\},
\end{equation*}
by $\Bv\in H_p^1(J, L_p(\Om(t)))\cap L_p(J,H_p^2(\Om(t)))^N$ we mean that 
\begin{equation*}
\Te^*\Bv=\Bv\circ\Te\in H_p^1(J,L_p(\dws))\cap L_p(J,H_p^2(\dws))^N,
\end{equation*}
where $\Te$ and $\Te^*$ are defined in Section 2 by (\ref{trans}) and (\ref{pull}), respectively.
The regularity statement for $\pi$ is understood in the same way. \\
b) The assumption $p>N+2$ implies that 
\begin{equation*}
h\in BUC(J,BUC^2(\tws)),\quad\pa_t h\in BUC(J,BUC^1(\tws)),
\end{equation*}
which means that the condition on the free interface can be understood in the classical sense. \\
c) Typical examples of viscosity functions $\mu$ satisfying our conditions are given by  
\begin{equation*}
\begin{aligned}
\mu(s) &=\nu (1+s^\frac{d-2}{2})&&\text{with $d=2,4,6$, or $d\geq 8$}, \\
\mu(s) &=\nu(1+s)^{\frac{d-2}{2}}&&\text{with $1\leq d<\infty$}
\end{aligned}
\end{equation*}
for $\nu>0$. For more information and details we refer e.g. to the work of \cite{FMS03}, \cite{D-R}, \cite{MNR} and \cite{Pr-R}. 
Obviously, if $d=2$, all  viscosity functions corresponds to the Newtonian situation.
\end{remarks}

Let us remark at this point that our  proof of Theorem \ref{thm:main} is inspired by the work by Pr\"uss and Simonett in \cite{P-S1} and \cite{P-S2}.
Our strategy may be described as follows: in Section \ref{sec:red} we transform the system \eqref{NS} to a problem on a fixed domain.
Maximal regularity properties of the associated linearized problem due to Pr\"uss and Simoonett \cite{P-S2} are described  in Section \ref{sec:linear}. Of special importance 
will be the function space  $\wt{\BBF}_3(a)$ which will be introduced  and investigated in Section 4.
Finally, in  Section \ref{sec:nonl}, we treat the nonlinear problem and give a proof of our  main theorem.  

In the following, the letter $C$ denote a  generic constant which value may change from line to line.

\section{Reduction to a fixed domain}\label{sec:red}

We start this section by calculating the divergence of the stress tensor,
i.e. by calculating explicitly 
$$
\di\{\mu_d(|\BD(\Bu)|^2)\BD(\Bu)\} \quad \mbox{for} \quad d=1,2.
$$ 
Let us remark first that given a vector $\Bu$ of length $m$ for $m\geq2$, we denote by $u_i$ its $i$-th component and
by $\Bu^\pr$ its tangential component, i.e. 
$\Bu=(u_1,\ldots,u_m)^T$ and $\Bu^\pr=(u_1,\dots,u_{m-1})^T$.
We then obtain 
\begin{equation*}
(\di \{\mu_d(|\BD(\Bu)|^2)\BD(\Bu)\})_i 
=\frac{1}{2}\sum_{j,k,l=1}^N\{2{\mu}'_d(|\BD(\Bu)|^2)D_{ij}(\Bu)D_{kl}(\Bu)
+\mu_d(|\BD(\Bu)|^2)\de_{ik}\de_{jl}\}(\pa_j\pa_k u_l+\pa_j\pa_l u_k).
\end{equation*}
For vectors $\Bu,\,\Bv$ we set  $\BA_d(\Bu)\Bv:=(A_{d,1}(\Bu)\Bv,\dots,A_{d,N}(\Bu)\Bv)^T$ where  
\begin{align*}
	A_{d,i}(\Bu)\Bv
		&:=-\sum_{j,k,l=1}^N A_{d,i}^{j,k,l}(\BD(\Bu))(\pa_j\pa_k v_l+\pa_j\pa_l v_k) \quad \mbox{and}, \\
	A_{d,i}^{j,k,l}(\BD(\Bu))
		&:=\frac{1}{2}\Big(2\dot{\mu}_d(|\BD(\Bu)|^2)D_{ij}(\Bu)D_{kl}(\Bu)+\mu_d(|\BD(\Bu)|^2)\de_{ik}\de_{jl}\Big) \notag
\end{align*}
for $d=1,2$ and $i=1,\dots,N$. We then have 
\begin{equation*}
	\BA_d(\Bu)\Bu
		=-\di\{\mu_d(|D(\Bu)|^2)D(\Bu)\}\quad \mbox{and}\quad 
	\BA_d(0)\Bu
		=-\mu_d(0)(\De\Bu+\nabla\di\Bu).
\end{equation*}
In addition, we set
\begin{equation*}
	\BA(\Bu)\Bv
		:=\chi_{\Om_1(t)}\BA_1(\Bu)\Bv+\chi_{\Om_2(t)}\BA_2(\Bu)\Bv\quad \mbox{and}\quad
	\wt{\pi}
		:=\pi+\rho\ga_a x_N.
\end{equation*}
The system  (\ref{NS}) may thus be rewritten as  
\begin{equation}\label{eq:red}
	\left\{\begin{aligned}
		\rho(\pa_t\Bv+\Bv\cdot\nabla\Bv)-\mu(0)\De\Bv+\nabla\wt{\pi}
			&=-(\BA(\Bv)-\BA(0))\Bv && \text{in $\Om(t)$,} \\
		\di\Bv
			&=0 && \text{in $\Om(t)$,} \\
		-\jump{\wt{\BT}\Bn_\Ga}
			&=\si H_\Ga\Bn_\Ga+\jump{\rho}\ga_a x_N && \text{on $\Ga(t)$,} \\
		\jump{\Bv}
			&= 0 && \text{on $\Ga(t)$,} \\
		\BV_\Ga
			&=\Bv\cdot\Bn_\Ga && \text{on $\Ga(t)$,} \\
		\Bv|_{t=0}
			&=\Bv_0 && \text{in $\Om_0$,} \\
		\Ga|_{t=0}
			&=\Ga_0,
	\end{aligned}\right.
\end{equation}
where $\wt{\BT}=\chi_{\Om_1(t)}\BT_1(\Bv,\wt{\pi})+\chi_{\Om_2(t)}\BT_2(\Bv,\wt{\pi})$ and $\mu(0)=\chi_{\Om_1(t)}\mu_1(0)+\chi_{\Om_2(t)}\mu_2(0)$.

Next, we transform the problem (\ref{eq:red}) to a problem on the fixed domain $\dws$.
To this end, we define a transformation $\Te$ on $J\times\dws$ for $J=(0,T)$ with $T>0$ as 
\begin{align}\label{trans}
	\Te: J\times\dws\ni(\tau,\xi^\pr,\xi_N)
		\mapsto(t,x^\pr,x_N)\in\bigcup_{s\in J}\{s\}\times\Om(s),  
	\text{ with $t=\tau,\ x^\pr=\xi^\pr,\ x_N=\xi_N+h(\xi^\pr,\tau)$} 
\end{align}
for some scalar-valued function $h$. Note that $\det{\CJ\Te}=1$, where $\CJ\Te$ denotes the Jacobian matrix of $\Te$. 
We now define 
\begin{equation}\label{pull}
	\Bu(\Btau,\xi)
		:=\Te^* \Bv(t,x)
		:=\Bv(\Te(\tau,\xi)), \quad
	\te(\Btau,\xi)
		:=\Te^* \pi(t,x),
\end{equation}
as well as 
\begin{equation}\label{push}
	\Te_* f(\tau,\xi):=f(\Te^{-1}(x,t))\quad\text{for $f:\dws\to\ws$},
\end{equation}
where $\Te^{-1}$ given by  $\Te^{-1}(x,t)=(t,x^\pr,x_N-h(t,x^\pr))$.
This change of coordinates implies the relations
\begin{align}\label{deriv}
	\pa_t
		=\pa_\tau-(\pa_\tau h)D_N,\quad
	\pa_j
		=D_j-(D_j h)D_N,\quad
	\pa_j\pa_k
		=D_jD_k-\CF_{jk}(h), \quad \mbox{where}\\
	\CF_{jk}(h)
		:=(D_jD_k h)D_N+(D_j h)D_ND_k+(D_kh)D_jD_N-(D_jh)(D_kh)D_N^2 \notag
\end{align}
for $j,k=1,\dots,N$, $\pa_\tau=\pa/\pa \tau$ and $D_j=\pa/\pa\xi_j$ since $D_N h=0$.

Setting $\De^\pr=\sum_{j=1}^{N-1}\pa_j^2$ and $\nabla^\pr=(\pa_1,\dots,\pa_{N-1})^T$, we first obtain similarly as in  (\ref{deriv})
\begin{equation}\label{E}
	\BD_x(\Bv)
		=\BE(\Bu,h)
		:=\BD_\xi(\Bu)-\CE(\Bu,h),\quad
	\CE(\Bu,h)
		:=(D_N\Bu)\begin{bmatrix}\nabla_{\xi}^\pr h \\ 0 \end{bmatrix}^T
			+\begin{bmatrix}\nabla_{\xi}^\pr h \\ 0\end{bmatrix}(D_N\Bu)^T.
\end{equation}
Secondly, following \cite[Section 2]{P-S1} we see that  
\begin{align*}
	H_\Ga
		&=\sum_{j=1}^{N-1}D_j\Big(\frac{\nabla_{\xi^\pr}^\pr h(t,\xi^\pr)}{\sqrt{1+|\nabla_{\xi^\pr}^\pr h(t,\xi^\pr)|^2}}\Big)
		=\De_{\xi^\pr}^\pr h-\CH(h), \quad \mbox{where} \\
	\CH(h)
		&:=\frac{|\nabla_{\xi^\pr}^\pr h|^2\De_{\xi^\pr}^\pr h}{(1+\sqrt{1+|\nabla_{\xi^\pr}^\pr h|^2})\sqrt{1+|\nabla_{\xi^\pr}^\pr h|^2}}
			+\sum_{j,k=1}^{N-1}\frac{(D_j h)(D_k h)(D_j D_k h)}{(1+|\nabla_{\xi^\pr}^\pr h|^2)^{3/2}}.
\end{align*}
Hence, system (\ref{eq:red}) is reduced to the following problem on $\dws$
\begin{equation}\label{eq:flat}
	\left\{\begin{aligned}
	\rho\pa_\tau\Bu-\mu(0)\De\Bu+\nabla\te
		&=\BF(\Bu,\te,h) && \text{in $\dws$,} \\
	\di\Bu
		&=F_d(\Bu,h) && \text{in $\dws$,} \\
	-\jump{\mu(0)(D_N u_j+D_j u_N)}
		&=G_j(\Bu,\jump{\te},h) && \text{on $\bdry$,} \\
	\jump{\te}-2\jump{\mu(0)D_Nu_N}-(\jump{\rho}\ga_a+\si\De^\pr)h
		&=G_N(\Bu,h) && \text{on $\bdry$,} \\
	\jump{\Bu}
		&=0 && \text{on $\bdry$,} \\
	\pa_\tau h-u_N
		&=G_h(\Bu^\pr,h) && \text{on $\bdry$,}  \\
	\Bu|_{t=0}
		&=\Bu_0 && \text{on $\dws$,} \\
	h|_{t=0}
		&=h_0 && \text{on $\tws$,}
	\end{aligned}\right.
\end{equation}
where $j=1,\dots,N-1$ and $\BF=(F_1,\dots,F_N)^T$. The terms on the right hand side of  (\ref{eq:flat}) are given by
\begin{eqnarray*}
	F_i(\Bu,\te,h)
		 &:=&\rho\{(\pa_\tau h)D_N u_i-(\Bu\cdot\nabla)u_i+(\Bu^\pr\cdot\nabla^\pr h)D_Nu_i\} 
			-\mu(0)\sum_{j=1}^N\CF_{jj}(h)u_i+(D_i h)D_N\te+\CA_i(\Bu,h)  \\
	G_j(\Bu,\jump{\te},h)
		 &:=&\si\CH(h)D_jh-\{(\jump{\rho}\ga_a+\si\De^\pr)h\}D_jh
			+\jump{\te}D_jh+\CB_j(\Bu,h)  \\
	G_N(\Bu,h)
		 &:=& -\si\CH(h)+\CB_N(\Bu,h), \\
	F_d(\Bu,h)
		 &:=& (D_N\Bu^\pr)\cdot\nabla^\pr h  = D_N(\Bu^\pr\cdot\nabla^\pr h),\\
	G_h(\Bu,h)
		 &:=& -\Bu^\pr\cdot\nabla^\pr h.
\end{eqnarray*}
Here $\CA_i(\Bu,h)$, $\CB_j(\Bu,h)$ and $\CB_N(\Bu,h)$ are given by 
\begin{align*}
	\CA_i(\Bu,h)
		 :=&\sum_{j,k,\ell=1}^N
				\Big(A_i^{j,k,\ell}(\BE(\Bu,h))-A_i^{j,k,\ell}(0)\Big)
				(D_jD_ku_\ell+D_jD_\ell u_k) \notag \\
			& -\sum_{j,k,\ell=1}^N
				\Big(A_i^{j,k,\ell}(\BE(\Bu,h))-A_i^{j,k,\ell}(0)\Big)
				(\CF_{jk}(h)u_\ell+\CF_{j\ell}(h)u_k), \quad i=1,\ldots,N, \notag  \displaybreak[0] \\
	\CB_j(\Bu,h)
		:=&-\jump{\mu(|\BE(\Bu,h)|^2)D_Nu_N}D_j h \notag 
			+\jump{\{\mu(|\BE(\Bu,h)|^2)-\mu(0)\}(D_Nu_j+D_j u_N)} \notag \\
			&  -\sum_{k=1}^{N-1}\jump{\mu(|\BE(\Bu,h)|^2)(D_j u_k+D_k u_j)}D_kh \notag \\
			&  +\sum_{k=1}^{N-1}\jump{\mu(|\BE(\Bu,h)|^2)(D_Nu_j D_k h+D_N u_k D_j h)}D_k h, \quad j= 1,\ldots, N-1, \notag \displaybreak[0] \\
	\CB_N(\Bu,h)
		:=& 2\jump{\{\mu(|\BE(\Bu,h)|^2)-\mu(0)\}D_Nu_N} \notag 
			+\jump{\mu(|\BE(\Bu,h)|^2)D_Nu_N}|\nabla^\pr h|^2 \notag \\
			&  -\sum_{k=1}^{N-1}\jump{\mu(|\BE(\Bu,h)|^2)(D_Nu_k+D_ku_N)}D_kh \notag
\end{align*}
where  
$$
A_i^{j,k,l}(\BE(\Bu,h)):=\chi_{\lhs}A_{i,1}^{j,k,l}(\BE(\Bu,h))+\chi_{\uhs}A_{i,2}^{j,k,l}(\BE(\Bu,h)).
$$
In particular, note that
\begin{eqnarray*}
	\mu(|\BE(\Bu,h)|^2)
		&=&\chi_{\lhs}\mu_1(|\BE(\Bu,h)|^2)+\chi_{\uhs}\mu_2(|\BE(\Bu,h)|^2),\\
	\rho
		&=&\chi_{\lhs}\rho_1+\chi_{\uhs}\rho_2, \\
	\mu(0)
		&=&\chi_{\lhs}\mu_1(0)+\chi_{\uhs}\mu_2(0).
\end{eqnarray*}
Finally, in order to simplify our notation we set
\begin{align*}
	\BG(\Bu,\jump{\te},h)
		&:=(G_1(\Bu,\jump{\te},h),\dots,G_{N-1}(\Bu,\jump{\te},h),G_N(\Bu,h))^T  \\
	{\CA}(\Bu,h)
		&:=(\CA_1(\Bu,h),\dots,\CA_N(\Bu,h))^T, \\
	{\CB}(\Bu,h)
		&:=(\CB_1(\Bu,h),\dots,\CB_N(\Bu,h))^T.
\end{align*}

\section{The linearized problem}\label{sec:linear}
The above set of equations \eqref{eq:flat} leads to the following associated linear problem 
\begin{equation}\label{eq:lin}
	\left\{\begin{aligned}
		\rho\pa_t\Bu-\nu\De\Bu+\nabla\te
			&=\Bf && \text{in $\dws$,} \\
		\di\Bu
			&=f_d && \text{in $\dws$,} \\
		-\jump{\nu(D_Nu_j+D_ju_N)}	
			&=g_j && \text{on $\bdry$}, \\
		\jump{\te}-2\jump{\mu D_Nu_N}-(\jump{\rho}\ga_a+\si\De^\pr)h
			&=g_N && \text{on $\bdry$,} \\
		\jump{\Bu}
			&=0 && \text{on $\bdry$,} \\
		\pa_t h-u_N
			&=g_h && \text{on $\bdry$,} \\
		\Bu|_{t=0}
			&=\Bu_0 && \text{in $\dws$}, \\
		h|_{t=0}
			&=h_0 && \text{on $\tws$},
	\end{aligned}\right.
\end{equation}
where $j=1,\dots,N-1$ and  $\Bg=(g_1,\dots,g_N)^T$. Here,
\begin{equation*}
	\rho
		=\rho_1\chi_{\lhs}+\rho_2\chi_{\uhs},\quad
	\nu
		=\nu_1\chi_{\lhs}+\nu_2\chi_{\uhs}
\end{equation*}
with $\rho_i>0$ and $\nu_i>0$ for $i=1,2$.

The optimal regularity property of the solution of the above problem \eqref{eq:lin} will be of central importance in the following.  
To this end, let us  recall  first the definition of some  function spaces. Indeed, let $m\in\BBN$, $\Om \subset \R^N$ be an open set and $X$ be a Banach space. Then,  
for $1 < p < \infty$  and $s \in \R$, the Bessel potential space of order s is denoted by $H_p^s(\Om,X)$. Moreover, given $1\leq p<\infty$ and 
$s\in(0,\infty)\setminus\BBN$, $W_p^s(\Om,X)$ denotes the Sobolev-Slobodeckii space equipped with  the norm
$$
	\|f\|_{W_p^s(\Om,X)}
		=\|f\|_{W_p^{[s]}(\Om,X)}
			+\sum_{|\al|=[s]}(\int_\Om\int_\Om\frac{\|\pa^\al f(x)-\pa^\al f(y)\|_X^p}{|x-y|^{m+(s-[s])p}}\,dxdy)^{1/p},
$$
where $[s]$ is the largest non-negative integer smaller than $s$ and $\pa^\al f(x)=\pa^{|\al|}f(x)/\pa x_1^{\al _1}\dots\pa x_m^{\al_m}$. By $BUC(\Om,X)$ we denote 
the Banach space of uniformly continuous and bounded functions on $\Om$ and $BUC^k(\Om,X)$ denotes the set of functions in $C^k(\Om,X)$ such that all derivatives up to order $k$ 
are belonging to $BUC(\Om,X)$ for $k\in\BBN$. For $1\leq p<\infty$, the homogeneous Sobolev space $\dot{H}_p^1(\Om)$ of order $1$ is  defined as
$\dot{H}_p^1(\Om) =\{f\in L_{1,\text{loc}}(\Om)\mid \|\nabla f\|_{L_p(\Om)}<\infty\}$. Finally, $\dot{H}_p^{-1}(\ws)$ denotes the dual space of $\dot{H}_{p^\pr}^1(\ws)$ for $1/p+1/p^\pr=1$.

The following result due to Pr\"uss and Simonett \cite{P-S1} characterizes the set of data on the right-hand sides of \eqref{eq:lin} for one  
obtains a solution of \eqref{eq:lin} in the maximal regularity space.

\begin{prop} \cite[Theorem 5.1]{P-S1}, \cite[Theorem 3.1]{P-S2}. \label{lemm:lin}
Let $1<p<\infty$, $p\neq3/2,3$, $a>0$ and  $J=(0,a)$.
Suppose that
\begin{equation*}
		\rho_i>0,\ \nu_i>0,\ \ga_a \geq0\ \text{and $\si>0$}, \quad i =1,2.
\end{equation*}
Then, equation \eqref{eq:lin} admits a unique solution $(\Bu,\te,h)$ 
\begin{align*}
		\Bu
			&\in (H_p^1(J,L_p(\dws))\cap L_p(J,H_p^2(\dws)))^N, \\
		\te
			&\in L_p(J,\dot{H}_p^1(\dws)), \\
		\jump{\te}
			&\in W_p^{1/2-1/(2p)}(J,L_p(\tws))\cap L_p(J,W_p^{1-1/p}(\tws)), \\
		h
			&\in W_p^{2-1/(2p)}(J,L_p(\tws))  \cap H_p^1(J,W_p^{2-1/(2p)}(\tws)) \cap L_p(J,W_p^{3-1/p}(\tws))
	\end{align*}
if  and only if the data $(\Bf,f_d,\Bg,g_h,\Bu_0,h_0)$ satisfy the following regularity and compatibility conditions:
\begin{eqnarray*}
\Bf  &\in& L_p(J,L_p(\dws))^N,  \\
f_d  &\in& H_p^1(J,\dot{H}_p^{-1}(\ws))\cap L_p(J,H_p^1(\dws)),  \\
\Bg  &\in& (W_p^{1/2-1/(2p)}(J,L_p(\tws))\cap L_p(J,W_p^{1-1/p}(\tws)))^N,  \\
g_h  &\in& W_p^{1-1/(2p)}(J,L_p(\tws))\cap L_p(J,W_p^{2-1/p}(\tws)),  \\
\Bu_0  &\in& W_p^{2-2/p}(\dws)^N, \\ 
h_0 &\in& W_p^{3-2/p}(\tws),  \\
\di\Bu_0  &=& f_d(0)\ \text{in $\dws$},\\
 \jump{\Bu_0}&=& 0\quad \text{on $\tws$ if $p>3/2$},  \\
g_j(0) &=& -\jump{\nu(D_Nu_{0j}+D_ju_{0N})} =g_j(0) \quad \text{on $\tws$ if $p>3$}
\end{eqnarray*}
for all $j=1,\ldots,N-1$. Moreover, the solution map $[(f,f_d,g,g_h,\Bu_0,h_0)\mapsto(\Bu,\te,h)]$ is continuous between the corresponding spaces.
\end{prop}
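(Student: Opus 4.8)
\medskip\noindent\textbf{Proof proposal.}
The necessity of the listed regularity and compatibility conditions is the routine direction, and I would settle it first: it follows from trace theory on anisotropic $L_p$ time-space Sobolev spaces, from the fact that $\Bu\in H_p^1(J,L_p(\dws))\cap L_p(J,H_p^2(\dws))$ forces $\di\Bu\in H_p^1(J,\dot{H}_p^{-1}(\ws))\cap L_p(J,H_p^1(\dws))$ and $\Bu|_{t=0}\in W_p^{2-2/p}(\dws)$, and from taking the traces of the equations of \eqref{eq:lin} on $\bdry$; the conditions $\jump{\Bu_0}=0$ for $p>3/2$ and $-\jump{\nu(D_Nu_{0j}+D_ju_{0N})}=g_j(0)$ for $p>3$ are precisely those needed for the corresponding boundary traces at $t=0$ to be well defined.

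For the sufficiency the plan is a three-stage construction; observe first that, since $\dws$ consists of two half-spaces glued along the flat interface $\bdry$, the problem has constant coefficients and no localization is needed. \emph{First}, I would reduce to homogeneous bulk data: using maximal $L_p$-regularity for the parabolic whole-space problem, a bounded right inverse of the divergence that simultaneously respects the two scales $H_p^1(J,\dot{H}_p^{-1}(\ws))$ and $L_p(J,H_p^1(\dws))$ of $f_d$, and an extension of $\Bu_0$, one subtracts a known function and is reduced to the case $\Bf=0$, $f_d=0$, $\Bu_0=0$ together with modified but still admissible interface data $(\Bg,g_h,h_0)$; a further step removes $h_0$ by solving a pure $h$-equation. \emph{Second}, I would solve the reduced problem by the Laplace transform in $t$ (covariable $\la$, $\operatorname{Re}\la>0$) and the Fourier transform in $x'\in\tws$ (covariable $\xi'$): on each half-line this yields a constant-coefficient ODE system in $x_N$ whose decaying solutions are spanned by $e^{-\om_i|x_N|}$ and $e^{-|\xi'|\,|x_N|}$ with $\om_i=(\rho_i\la/\nu_i+|\xi'|^2)^{1/2}$, the transformed pressure being proportional to $e^{-|\xi'|\,|x_N|}$, and $\wh{h}$ being determined algebraically from $\la\wh{h}-\wh{u}_N|_{x_N=0}=\wh{g}_h$; imposing the six transmission and boundary conditions (continuity $\jump{\Bu}=0$, the tangential and normal stress jumps including the curvature term $\si\De^\pr h$, and the kinematic relation) produces a linear algebraic system for the remaining Fourier--Laplace coefficients. \emph{Third}, I would analyse the resulting symbols: one shows that the determinant of this system (the boundary symbol) is bounded below uniformly for $\operatorname{Re}\la\geq0$ and $(\la,\xi')\neq0$, and that the solution symbols, together with their $\la$- and $\xi'$-derivatives counted with the natural parabolic weights, constitute an $\CR$-bounded family of operators on $L_p(\tws)$; the operator-valued Fourier multiplier theorem of Weis, applied in the UMD space $L_p(\tws)$, then delivers the solution operator with exactly the claimed maximal regularity, while the finer intersection-space regularity of $h$ and of $\jump{\te}$ follows from mixed-derivative theorems applied to the explicit symbols, and continuity of the solution map is read off from the construction.

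The main obstacle is the third stage, and within it the Lopatinskii--Shapiro (Kreiss) condition for the surface-tension two-phase problem: one must verify that the boundary determinant, into which $\si\De^\pr h\sim\si|\xi'|^2\wh{h}$ enters the normal-stress balance at top order while $\wh{h}\sim\wh{u}_N/\la$ enters through the kinematic condition, has no zeros on $\{\operatorname{Re}\la\geq0\}\setminus\{0\}$ and carries the homogeneity required for the parabolically scaled $\CR$-bounds. Here $\si>0$ is indispensable: it is precisely what renders the $h$-component parabolic of the order needed for the stated maximal regularity of $h$, whereas $\ga_a\geq0$ and the possibly sign-indefinite $\jump{\rho}$ enter only as lower-order perturbations. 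A secondary technical nuisance is the inhomogeneous divergence $f_d$ of the first stage, whose two distinct regularity scales $H_p^1(J,\dot{H}_p^{-1}(\ws))$ and $L_p(J,H_p^1(\dws))$ must both be matched by the chosen right inverse of $\di$. Since this statement is quoted verbatim from \cite{P-S1} and \cite{P-S2}, one may of course alternatively simply invoke those references.
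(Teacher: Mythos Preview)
Your proposal is a faithful sketch of how the cited theorems of Pr\"uss and Simonett are actually proved, and the three-stage scheme (reduction to homogeneous bulk data, Fourier--Laplace analysis on each half-space, verification of the Lopatinskii--Shapiro condition and $\CR$-boundedness of the resulting symbols) is precisely the architecture of \cite{P-S1,P-S2}. Note, however, that the present paper does not reprove this proposition at all: it simply invokes \cite[Theorem~5.1]{P-S1} and \cite[Theorem~3.1]{P-S2} as a black box, which is exactly the alternative you mention in your last sentence. So your sketch is not wrong, but it supplies far more than the paper does; for the purposes of this paper the appropriate ``proof'' is the bare citation.
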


\section{Properties of function spaces involved}
In order to derive estimates for the nonlinear mappings occuring on the right-hand sides of \eqref{eq:flat}
we study first embedding properties of the functions spaces involved. 
For $a>0$ let $J=(0,a)$ and set
\begin{align*}
	\BBE_1(a)
		& =\{\Bu\in (H_p^1(J,L_p(\dws))\cap L_p(J,H_p^2(\dws)))^N\mid \jump{\Bu}=0\}, \\
	\BBE_2(a)
		& =L_p(J,\dot{H}_p^1(\dws)), \notag \\
	\BBE_3(a)
		& =W_p^{1/2-1/(2p)}(J,L_p(\tws))\cap L_p(J,W_p^{1-1/p}(\tws)),\notag\\
	\BBE_4(a)
		& =W_p^{2-1/(2p)}(J,L_p(\tws))\cap H_p^1(J,W_p^{2-1/p}(\tws))\notag\\
			&\qquad\cap W_p^{1/2-1/(2p)}(J,H_p^2(\tws))\cap L_p(J,W_p^{3-1/p}(\tws))\notag
\end{align*} 
as well as 
\begin{align*}
	\BBF_1(a)
		& =L_p(J,L_p(\dws))^N, \\
	\BBF_2(a)
		& =H_p^1(J,\dot{H}_p^{-1}(\ws))\cap L_p(J,H_p^1(\dws)),\notag\\
	\BBF_3(a)
		& =(W_p^{1/2-1/(2p)}(J,L_p(\tws))\cap L_p(J,W_p^{1-1/p}(\tws)))^N,\notag\\
	\BBF_4(a)
		& =W_p^{1-1/(2p)}(J,L_p(\tws))\cap L_p(J,W_p^{2-1/p}(\tws)). \notag
\end{align*}

We then have the following result due to Pr\"uss and Simonett \cite{P-S1}. 

\begin{lemm} \cite[Lemma 6.1]{P-S1}. \label{lemm:em}
Let $N+2<p<\infty$, $a>0$ and $J=(0,a)$. Then the following properties hold true.
\begin{enumerate}
[{\rm a)}] \item\label{lemm:em_0}
$\BBE_3(a)$ and $\BBF_4(a)$ are multiplication algebras.
\item\label{lemm:em_1}
$\BBE_1(a)\hookrightarrow (BUC(J,BUC^1(\dws))\cap BUC(J,BUC(\ws)))^N$ and $\BBE_1(a)\hookrightarrow W_p^{1/2}(J,H_p^1(\dws))^N$.
\item\label{lemm:em_6}
$\BBE_3(a)\hookrightarrow BUC(J,BUC(\tws))$.
\item\label{lemm:em_2}
$\BBE_4(a)\hookrightarrow BUC^1(J,BUC^1(\tws))\cap BUC(J,BUC^2(\tws))$.
\item 
$W_p^{2-1/(2p)}(J,L_p(\tws))\cap H_p^1(J,W_p^{2-1/p}(\tws)) \cap L_p(J,W_p^{3-1/p}(\tws))\hookrightarrow\BBE_4(a)$ 
\end{enumerate}
\end{lemm}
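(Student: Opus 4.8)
The plan is to obtain all five items from a single tool, the \emph{mixed-derivative theorem}, combined with Sobolev embeddings; the point is that the hypothesis $N+2<p$ enters exactly as the condition which makes the interpolation parameters below available. First I would record the template: if $\Bu\in H_p^1(J,X_0)\cap L_p(J,X_1)$ with $X_1\hookrightarrow X_0$, then $\Bu\in H_p^{1-\te}(J,[X_0,X_1]_\te)$ for every $\te\in(0,1)$, and the Sobolev--Slobodeckii analogue follows by interpolation (this is the form used by Pr\"uss and Simonett; see also the work of Denk, Hieber and Pr\"uss). On the spatial side one uses $W_p^s(\R^n)\hookrightarrow BUC^k(\R^n)$ whenever $s-k>n/p$, with $n=N$ for the bulk spaces and $n=N-1$ for the interface spaces. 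Since $J=(0,a)$ is finite, the embedding constants may depend on $a$, which is fine; and since $p>N+2\ge4$ we have $p\ge2$, hence $H_p^s(J,X)\hookrightarrow W_p^s(J,X)$ wherever that is needed.

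For item (b) I would apply the template to get $\BBE_1(a)\hookrightarrow H_p^{1-\te}(J,H_p^{2\te}(\dws))^N$ for every $\te\in(0,1)$ and then choose $\te$ with $2\te>1+N/p$ and $1-\te>1/p$; such a $\te$ exists \emph{precisely when} $(N+2)/p<1$, which is our hypothesis, and it yields $\BBE_1(a)\hookrightarrow BUC(J,BUC^1(\dws))^N$. The side condition $\jump{\Bu}=0$ forces the traces from the two half-spaces to agree, hence global continuity, giving also $\BBE_1(a)\hookrightarrow BUC(J,BUC(\ws))^N$ (only $BUC$, not $BUC^1$, across $\bdry$); and taking $\te=\tfrac12$ instead gives $\BBE_1(a)\hookrightarrow H_p^{1/2}(J,H_p^1(\dws))^N\hookrightarrow W_p^{1/2}(J,H_p^1(\dws))^N$. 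Item (c) is the same argument with $n=N-1$: interpolating the two constituents of $\BBE_3(a)$ lands one in a space of anisotropic regularity $\big((\tfrac12-\tfrac1{2p})(1-\te),(1-\tfrac1p)\te\big)$ in (time, space), which embeds into $BUC(J,BUC(\tws))$ once $\tfrac{N-1}{p-1}<\te<\tfrac{p-3}{p-1}$ --- again solvable exactly when $p>N+2$.

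For item (d) I would split into the two continuity statements. The $BUC(J,BUC^2(\tws))$-part comes from interpolating the pieces $H_p^1(J,W_p^{2-1/p}(\tws))$ and $L_p(J,W_p^{3-1/p}(\tws))$, producing regularity $(1-\te,2-\tfrac1p+\te)$, and choosing $N/p<\te<1-1/p$ (possible since $p>N+1$). For the $BUC^1(J,BUC^1(\tws))$-part I would observe that $u\in\BBE_4(a)$ already lies in $BUC(J,BUC^1(\tws))$ by the previous sentence, while $\pa_t u\in W_p^{1-1/(2p)}(J,L_p(\tws))\cap L_p(J,W_p^{2-1/p}(\tws))=\BBF_4(a)$, so I would be reduced to showing $\BBF_4(a)\hookrightarrow BUC(J,BUC^1(\tws))$; by the template this asks for $\te$ with $\tfrac{p+N-1}{2p-1}<\te<\tfrac{2p-3}{2p-1}$, which once more is possible exactly when $p>N+2$. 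The same computation also settles item (a): both $\BBE_3(a)$ and $\BBF_4(a)$ embed into $BUC(J\times\tws)\subset L_\infty$, and any $X=W_p^{s_0}(J,L_p(\tws))\cap L_p(J,W_p^{s}(\tws))$ with $s_0,s>0$ which embeds into $L_\infty$ is a multiplication algebra --- writing $f(t)g(t)-f(\tau)g(\tau)=(f(t)-f(\tau))g(t)+f(\tau)(g(t)-g(\tau))$ in the time-Gagliardo seminorm and using the pointwise-in-$t$ fractional Leibniz (Moser) estimate for the $L_p(J,W_p^s)$-part yields $\|fg\|_X\le C(\|f\|_{L_\infty}\|g\|_X+\|f\|_X\|g\|_{L_\infty})$. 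Finally, item (e) I would treat as a redundancy statement: applying the template to $H_p^1(J,W_p^{2-1/p}(\tws))\cap L_p(J,W_p^{3-1/p}(\tws))$ with $\te=\tfrac12+\tfrac1{2p}$ produces anisotropic regularity $(\tfrac12-\tfrac1{2p},\tfrac52-\tfrac1{2p})$, and since $\tfrac52-\tfrac1{2p}>2$ this space embeds into $W_p^{1/2-1/(2p)}(J,H_p^2(\tws))$, the only constituent of $\BBE_4(a)$ not already among the three hypothesised spaces.

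The hard part is not conceptual but careful bookkeeping. One must keep track of the Besov ($W_p^s=B^s_{pp}$) versus Bessel-potential/Triebel--Lizorkin ($H_p^s=F^s_{p2}$) scales and of which inclusions between them hold for $p>2$ --- only with a strict gain of smoothness in the unfavourable direction --- and one must transfer the mixed-derivative theorem, customarily stated for Bessel-potential spaces, to the Sobolev--Slobodeckii setting by interpolation. A secondary nuisance is the finiteness of $J$, which necessitates extension operators across $t=a$ and makes constants $a$-dependent; this is harmless here. Beyond that, every step is an elementary computation with Sobolev exponents.
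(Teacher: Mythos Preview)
The paper does not prove this lemma at all: it is quoted verbatim from Pr\"uss and Simonett \cite[Lemma~6.1]{P-S1} and used as a black box. Your sketch supplies exactly the argument Pr\"uss and Simonett give there, namely the mixed-derivative theorem together with Sobolev embeddings, and your exponent bookkeeping is correct (in particular the observation that the constraint $p>N+2$ is precisely what makes the required interpolation windows nonempty). So your proposal is correct and matches the approach of the cited source; there is simply nothing in the present paper to compare it against.
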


The crucial point of our proof is the investigatation  of the viscosity functions $\mu$.
To this end, given $a>0$, we introduce  the function space $\wt{\BBF}_3(a)$ as  
\begin{equation*}
	\wt{\BBF}_3(a)
		:=\{g\in BUC(J,BUC(\tws)): \|g\|_{\wt{\BBF}_3(a)}=\|g\|_{BUC(J,BUC(\tws))}+|g|_{\BBF_3(a)}<\infty\},
\end{equation*}
where $|g|_{\BBF_3(a)}=|g|_{\BBF_3(a),1} + |g|_{\BBF_3(a),2}$ with
\begin{align*}
	|g|_{\BBF_3(a),1}
		&:=\Big(\int_J\int_J
				\frac{\|g(t)-g(s)\|_{L_p(\tws)}^p}{|t-s|^{\frac{1}{2}+\frac{p}{2}}}\,dtds
				\Big)^{1/p}\enskip \mbox{and} \\
	|g|_{\BBF_3(a),2}
		&:=\Big(\int_J\int_{\tws}\int_{\tws}
				\frac{|g(t,x^\pr)-g(t,y^\pr)|^p}{|x^\pr-y^\pr|^{N-2+p}}\,dx^\pr dy^\pr dt
				\Big)^{1/p}.
\end{align*}

We then obtain the following result. 

\begin{lemm}\label{F-tilde}
Let $N+2<p<\infty$, $a>0$, and $J=(0,a)$. Then the following properties hold true.	
\begin{enumerate}[{\rm a)}]
\item $\BBF_3(a)$ and $\wt{\BBF}_3(a)$ are multiplication algebras. In addition, 
\begin{equation*}
			\BBF_3(a)\hookrightarrow BUC(J,BUC(\tws))^N \quad \mbox{ and } \quad 
				\wt{\BBF}_3(a)\hookrightarrow BUC(J,BUC(\tws)).	
\end{equation*}
\item If $\ph\in BUC^1(\BBR)$ and $g\in\wt\BBF_3(a)$, then
\begin{equation*}
			\|\ph(g)\|_{\wt{\BBF}_3(a)}
				\leq \|\ph\|_{BUC(\BBR)}+\|\dot{\ph}\|_{BUC(\BBR)}|g|_{\BBF_3(a)}.
\end{equation*}
\item
There exists a positive constant $C$ such that 
\begin{equation*}
\|fg\|_{\BBF_3(a)} \leq C\|f\|_{\BBF_3(a)}\|g\|_{\wt{\BBF}_3(a)}\quad
\text{for $f\in\BBF_3(a)$ and $g\in\wt{\BBF}_3(a)$}. 
\end{equation*}
\end{enumerate}
\end{lemm}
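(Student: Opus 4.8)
The plan is to reduce everything to a handful of product- and composition estimates for Sobolev--Slobodeckii (Gagliardo) seminorms; the only non-elementary ingredient is the embedding $\BBF_3(a)\hookrightarrow BUC(J,BUC(\tws))^N$ from Lemma~\ref{lemm:em}, which is also the point where the hypothesis $p>N+2$ is used.

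First I would record the structural fact that, for a scalar function $g$, the norm of $\BBE_3(a)$ is equivalent to
\[
\|g\|_{L_p(J\times\tws)}+|g|_{\BBF_3(a),1}+|g|_{\BBF_3(a),2},
\]
because $|\cdot|_{\BBF_3(a),1}$ and $|\cdot|_{\BBF_3(a),2}$ are precisely the seminorms of $W_p^{1/2-1/(2p)}(J,L_p(\tws))$ and of $L_p(J,W_p^{1-1/p}(\tws))$. Thus $\wt{\BBF}_3(a)$ arises from $\BBF_3(a)$ by replacing the zeroth-order $L_p$-part with the $BUC(J,BUC(\tws))$-part; completeness of $\wt{\BBF}_3(a)$ then follows from completeness of $BUC(J,BUC(\tws))$ and Fatou's lemma applied to the two Gagliardo integrals, while the embeddings in a) are Lemma~\ref{lemm:em} for $\BBF_3(a)$ and, for $\wt{\BBF}_3(a)$, immediate from the shape of its norm. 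For the algebra property in a): $\BBF_3(a)=\BBE_3(a)^N$ is a multiplication algebra componentwise by Lemma~\ref{lemm:em}, and for $\wt{\BBF}_3(a)$ I would use the Leibniz splitting $f(t)g(t)-f(s)g(s)=f(t)(g(t)-g(s))+(f(t)-f(s))g(s)$ and its spatial analogue, pull the $L^\infty$-factors out of the Gagliardo integrals to get $|fg|_{\BBF_3(a)}\le\|f\|_{BUC(J,BUC(\tws))}|g|_{\BBF_3(a)}+|f|_{\BBF_3(a)}\|g\|_{BUC(J,BUC(\tws))}$, and combine this with $\|fg\|_{BUC(J,BUC(\tws))}\le\|f\|_{BUC(J,BUC(\tws))}\|g\|_{BUC(J,BUC(\tws))}$ to obtain submultiplicativity of $\|\cdot\|_{\wt{\BBF}_3(a)}$.

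Part b) is the easiest. Since $g$ is real-valued and bounded and $\ph\in BUC^1(\BBR)$, the composition $\ph(g)$ again lies in $BUC(J,BUC(\tws))$ with $\|\ph(g)\|_{BUC(J,BUC(\tws))}\le\|\ph\|_{BUC(\BBR)}$, while the mean value theorem gives the pointwise bound $|\ph(g(t,x^\pr))-\ph(g(s,y^\pr))|\le\|\dot{\ph}\|_{BUC(\BBR)}|g(t,x^\pr)-g(s,y^\pr)|$. Inserting this into each of the two Gagliardo integrals defining $|\cdot|_{\BBF_3(a)}$ yields $|\ph(g)|_{\BBF_3(a),k}\le\|\dot{\ph}\|_{BUC(\BBR)}|g|_{\BBF_3(a),k}$ for $k=1,2$, and adding these two pieces to the $BUC$-bound gives the asserted inequality with no extra constant.

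For the mixed estimate c) I would again use the Leibniz splitting. Writing $\|fg\|_{\BBF_3(a)}$ as the sum of $\|fg\|_{L_p(J\times\tws)}$ and the two seminorms $|fg|_{\BBF_3(a),k}$, the zeroth-order term is bounded by $\|g\|_{BUC(J,BUC(\tws))}\|f\|_{L_p(J\times\tws)}\le\|g\|_{\wt{\BBF}_3(a)}\|f\|_{\BBF_3(a)}$. In each seminorm the summand ``$f$ times a difference of $g$'' is estimated by pulling out $\|f\|_{BUC(J,BUC(\tws))}\le C\|f\|_{\BBF_3(a)}$ — this is where Lemma~\ref{lemm:em}, hence $p>N+2$, enters — which leaves $|g|_{\BBF_3(a),k}\le\|g\|_{\wt{\BBF}_3(a)}$; the summand ``a difference of $f$ times $g$'' is estimated by pulling out $\|g\|_{BUC(J,BUC(\tws))}\le\|g\|_{\wt{\BBF}_3(a)}$, which leaves $|f|_{\BBF_3(a),k}\le\|f\|_{\BBF_3(a)}$. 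Summing the contributions gives $\|fg\|_{\BBF_3(a)}\le C\|f\|_{\BBF_3(a)}\|g\|_{\wt{\BBF}_3(a)}$. The argument is essentially bookkeeping rather than analysis; the one point that needs care is the asymmetry in c) — one must charge the $L^\infty$-norm of the differentiated factor to $\BBF_3(a)$ (via the embedding) and that of the undifferentiated factor to $\wt{\BBF}_3(a)$, so that the product estimate closes with only \emph{one} factor in the full regularity space $\BBF_3(a)$. This is exactly the form needed in Section~\ref{sec:nonl}, where $f$ is a derivative-type term in $\BBF_3(a)$ while $g=\mu(|\BE(\Bu,h)|^2)$, obtained from b), a priori lies only in $\wt{\BBF}_3(a)$.
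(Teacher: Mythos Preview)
Your proof is correct and follows essentially the same route as the paper's: the Leibniz splitting $fg(t)-fg(s)=f(t)(g(t)-g(s))+(f(t)-f(s))g(s)$ together with pulling the $BUC$-factor out of the Gagliardo integrals handles the algebra property of $\wt{\BBF}_3(a)$ and the mixed estimate in c), the mean value theorem inside the Gagliardo integrals gives b), and the embedding $\BBF_3(a)\hookrightarrow BUC(J,BUC(\tws))^N$ from Lemma~\ref{lemm:em} is invoked at exactly the same point. Your additional remarks on completeness and on the asymmetry in c) are accurate and go slightly beyond what the paper spells out.
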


\begin{proof} 
a) The properties for $\BBF_3(a)$ is essentially given  in Lemma \ref{lemm:em} (\ref{lemm:em_0}) and (\ref{lemm:em_6}). The embedding $\wt{\BBF}_3(a)\hookrightarrow BUC(J,BUC(\tws))$ 
follows from the definition of $\wt{\BBF}_3(a)$. We thus only show  that $\wt{\BBF}_3(a)$ is a multiplication algebra. For $f,g\in\wt{\BBF}_3(a)$ it follows that
\begin{align*}
				\|fg\|_{BUC(J,BUC(\tws))}
					\leq \|f\|_{BUC(J,BUC(\tws))}\|g\|_{BUC(J,BUC(\tws))} 
					\leq \|f\|_{\wt{\BBF}_3(a)}\|g\|_{\wt{\BBF}_3(a)}.
\end{align*}
Considering $|\cdot|_{\BBF_3(a),1}$ we see that 
\begin{align*}
|fg|_{\BBF_3(a),1}
&\leq \|f\|_{BUC(J,BUC(\tws))} (\int_J\int_J\frac{\|g(t)-g(s)\|_{L_p(\tws)}^p}{|t-s|^{\frac{1}{2}+\frac{p}{2}}}\,dtds)^{1/p} \notag \\
						& \quad +\|g\|_{BUC(J,BUC(\tws))}
							\Big(\int_J\int_J\frac{\|f(t)-f(s)\|_{L_p(\tws)}^p}{|t-s|^{\frac{1}{2}+\frac{p}{2}}}\,dtds\Big)^{1/p} \notag \\
&\leq\|f\|_{\wt{\BBF}_3(a)}|g|_{\BBF_3(a),1}+\|g\|_{\wt{\BBF}_3(a)}|f|_{\BBF_3(a),1}.
\end{align*}
Similarly, $|fg|_{\BBF_3(a),2}\leq \|f\|_{\wt{\BBF}_3(a)}|g|_{\BBF_3(a),2}+\|g\|_{\wt{\BBF}_3(a)}|f|_{\BBF_3(a),2}$. 
This yields $\|fg\|_{\wt{\BBF}_3(a)}\leq C\|f\|_{\wt{\BBF}_3(a)}\|g\|_{\wt{\BBF}_3(a)}$,
which implies that $\wt{\BBF}_3(a)$ is a multiplication algebra. \\
b) By the mean value theorem
\begin{align*}
					|\ph(g)|_{\BBF_3(a)}
						= & \, \Big(\int_J\int_J
								\frac{\|\ph(g(t))-\ph(g(s))\|_{L_p(\tws)}^p}{|t-s|^{\frac{1}{2}+\frac{p}{2}}}\,dtds
								\Big)^{1/p} \\
							&+\Big(\int_J\int_{\tws}\int_{\tws}
								\frac{|\ph(g(t,x^\pr))-\ph(g(t,y^\pr))|^p}{|x^\pr-y^\pr|^{N-2+p}}\,dx^\pr dy^\pr dt
								\Big)^{1/p} \displaybreak[0] \\
						\leq & \, \|\dot{\ph}\|_{BUC(\BR)}\Big\{\Big(\int_J\int_J
								\frac{\|g(t)-g(s)\|_{L_p(\tws)}^p}{|t-s|^{\frac{1}{2}+\frac{p}{2}}}\,dtds
								\Big)^{1/p} \\
							&+\Big(\int_J\int_{\tws}\int_{\tws}
								\frac{|g(t,x^\pr)-g(t,y^\pr)|^p}{|x^\pr-y^\pr|^{N-2+p}}\,dx^\pr dy^\pr dt
								\Big)^{1/p}\Big\} \displaybreak[0] \\
						\leq & \, \|\dot{\ph}\|_{BUC(\BBR)}|g|_{\BBF_3(a)},
				\end{align*}
which yields the required inequality. \\
c) Obviously, 
\begin{align*}
					\|fg\|_{L_p(J,L_p(\tws))}
						\leq \|f\|_{L_p(J,L_p(\tws))}\|g\|_{BUC(J,BUC(\tws))} 
						\leq \|f\|_{\BBF_3(a)}\|g\|_{\wt{\BBF}_3(a)}.
				\end{align*}
On the other hand, we see that by Lemma \ref{F-tilde} (1) and by calculations similar to a) there exists a constant $C>0$
such that for $i=1,2$ 
\begin{equation*}
					|fg|_{\BBF_3(a),i}
						\leq \|f\|_{BUC(J,BUC(\tws))}|g|_{\BBF_3(a),i}+\|g\|_{BUC(J,BUC(\tws))}|f|_{\BBF_3(a),i}
						\leq C\|f\|_{\BBF_3(a)}\|g\|_{\wt{\BBF}_3(a)},
\end{equation*}
which combined with the above inequality completes the proof.
\end{proof}

We next recall basic properties of functions which are Fr\'echet differentiable. 
Let $X$ and $Y$ be  Banach spaces and $U\subset X$ be open.
We then denote the Fr\'{e}chet derivative of a differentiable mapping
$\Phi:U\to Y$ by $D\Phi:U\to\CL(X,Y)$ and its evaluation for $u\in U$ and $v\in X$ by $[D\Phi(u)]v\in Y$.
Moreover, a mapping $\Phi:U\to Y$ is called continuously Fr\'echet differentiable 
if and only if $\Phi$ is Fr\'echet differentiable on $U$ and its  Fr\'echet derivative $D\Phi$ is continuous on $U$.
The set of such continuously Fr\'echet differentiable mappings from $U$ to $Y$ is denoted by $C^1(U,Y)$.

In the sequel, we will make use of the chain and product rule for Fr\'echet differentiable functions.
In fact, in addition let $Z$ be a further Banach space and 
suppose that the mappings $f:U\to Y$ and  $g:Y\to Z$ are continuously Fr\'echet differentiable.
Then the composition $F=g\circ f: U\to Z$ is also continuously 
Fr\'echet differentiable and its evaluation at $x\in U$ and $\bar{x}\in X$ is given by
\begin{equation*}
		[DF(x)]\bar{x}
			=[Dg(f(x))][Df(x)]\bar{x}.
	\end{equation*} 
For the product rule, suppose that there exists a constant $M>0$ such that for every $y\in Y$ and $z\in Z$
\begin{equation*}
		\|yz\|_Y\leq M\|y\|_Y\|z\|_Z,
\end{equation*}
and also that $f:U\to Y$ and $g:U\to Z$ are continuously Fr\'echet differentiable. Set $F(x)=f(x)g(x)$ for $x\in U$.
Then $F:U\to Y$ is also continuously Fr\'echet differentiable and its evaluation at $x\in U$ and $\bar{x}\in X$ is given by
\begin{equation*}\label{prodform}
		[DF(x)]\bar{x}
			=g(x)[Df(x)]\bar{x}+f(x)[Dg(x)]\bar{x}.
\end{equation*}

Now, we define the solution space  $\BBE(a)$ and the data space $\BBF(a)$ for $a>0$ by 
\begin{align*}
	\BBE(a)
		&:=\{(\Bu,\te,\pi,h)\in\BBE_1(a)\times\BBE_2(a)\times\BBE_3(a)\times\BBE_4(a)
			\mid \jump{\te}=\pi\}, \\
	\BBF(a)
		&:=\BBF_1(a)\times\BBF_2(a)\times\BBF_3(a)\times\BBF_4(a).
\end{align*}
The spaces $\BBE(a)$ and $\BBF(a)$ are endowed with their natural norms, i.e. 
\begin{align*}
	\|(\Bu,\te,\pi,h)\|_{\BBE(a)}
		:&=\|\Bu\|_{\BBE_1(a)}
			+\|\te\|_{\BBE_2(a)}+\|\pi\|_{\BBE_3(a)}+\|h\|_{\BBE_4(a)}, \\
	\|(\Bf,f_d,\Bg,g_h)\|_{\BBF(a)}
		:&=\|\Bf\|_{\BBF_1(a)}
			+\|f_d\|_{\BBF_2(a)}+\|\Bg\|_{\BBF_3(a)}+\|g_h\|_{\BBF_4(a)}.
\end{align*}
Finally, we consider for $(\Bu,\te,\pi,h)\in\BBE(a)$ the nonlinear mapping $\BN$ which is defined as 
\begin{equation}\label{defN}
	\BN(\Bu,\te,\pi,h)
		:=(\BF(\Bu,\te,h),F_d(\Bu,h),\BG(\Bu,\pi,h),G_h(\Bu,h)),
\end{equation}
where the terms on the right hand side  are defined as  in Section \ref{sec:red}. 
For functions $\Bu=(u_1,\dots,u_N)^T$ defined on $\dws$ we set 
\begin{align}\label{pm}
	\Bu^1
		:&=(u_1^1,\dots,u_N^1),\quad
	u_j^1	
		:=\chi_{\lhs}u_j,\\
	\Bu^2
		:&=(u_1^2,\dots,u_N^2),\quad
	u_j^2
		:=\chi_{\uhs}u_j.\notag
\end{align}
Recalling  the definition of  $\BE(\Bu,h)$ and $\CE(\Bu,h)$ in \eqref{E}, the following lemma and its corollary shows that various functions 
occuring in the definition of $N$ in \eqref{defN} are  Fr\'echet dfferentiable.

\vspace{.2cm}\noindent
\begin{lemm}\label{Frechet1}
Let $N+2<p<\infty$, $a>0$ and $J=(0,a)$. Then the following assertions  hold true.
\begin{enumerate}[{\rm a)}]
\item For $\phi\in BUC^1(\BBR)$, the mapping
\begin{equation*}
				\ph:BUC(J,BUC(\dws))\to BUC(J,BUC(\dws))
\end{equation*}
is continuously Fr\'echet differentiable.
\item For $\psi \in BUC^3(\BBR)$, the mapping	
\begin{equation*}
				\psi:\wt{\BBF}_3(a)\to\wt{\BBF}_3(a)
			\end{equation*}
is continuously Fr\'echet differentiable.
\item Let $\phi \in BUC^1(\BBR)$ and for $u$ defined as in \eqref{pm} set 
\begin{equation*}
\Phi^d(\Bu,h) :=\ph(|\BE(\Bu^d,h)|^2)\quad\text{for $d=1,2$}.
\end{equation*}
Then $\Phi^d:\BBE_1(a)\times\BBE_4(a)\to BUC(J,BUC(\dws))$ is continuously Fr\'echet differentiable.
\item Let $\psi \in BUC^3(\BBR)$ and for $u$ defined as in \eqref{pm} set 
\begin{equation*}
\Psi^d(\Bu,h):=\psi(|\ga_0\BE(\Bu^d,h)|^2)\quad\text{for $d=1,2$},
\end{equation*}
where $\ga_0$ denotes the trace to $\bdry$. Then $\Psi^d:\BBE_1(a)\times\BBE_4(a)\to\wt{\BBF}_3(a)$ is continuously Fr\'echet differentiable.
\end{enumerate}
\end{lemm}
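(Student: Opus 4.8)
The plan is to deduce all four assertions from a single mechanism: write each nonlinear map as a composition of bounded multilinear maps — which are automatically $C^\infty$ — with a superposition (Nemytskii) operator of the kind described in a) and b), and then invoke the chain rule for Fr\'echet differentiable maps recalled above. Thus a) and b) carry the genuine content, while c) and d) are bookkeeping on top of them. For a), I would set $X:=BUC(J,BUC(\dws))$, which is a Banach algebra, and take as candidate derivative $[D\ph(u)]v:=(\ph'\circ u)\,v$; since $\ph'\in BUC(\BBR)$ the function $\ph'\circ u$ belongs to $X$, so multiplication by it is a bounded operator and $D\ph(u)\in\CL(X)$. The remainder is controlled pointwise by the mean value theorem: for each $(\tau,\xi)$,
\[
\ph(u+v)-\ph(u)-(\ph'\circ u)v=\bigl(\ph'(u+\te v)-\ph'(u)\bigr)v,\qquad\te=\te(\tau,\xi)\in(0,1),
\]
hence $\|\ph(u+v)-\ph(u)-(\ph'\circ u)v\|_X\le\om(\|v\|_X)\|v\|_X=o(\|v\|_X)$, where $\om$ is the modulus of continuity of the uniformly continuous function $\ph'$; the same estimate gives $\|D\ph(u)-D\ph(\bar u)\|_{\CL(X)}\le\om(\|u-\bar u\|_X)$, i.e.\ continuity of the derivative.

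For b), put $Y:=\wt{\BBF}_3(a)$. By Lemma~\ref{F-tilde}\,a), $Y$ is a multiplication algebra, and by Lemma~\ref{F-tilde}\,b) every $\chi\in BUC^1(\BBR)$ satisfies $\chi(g)\in Y$ with $\|\chi(g)\|_Y\le\|\chi\|_{BUC(\BBR)}+\|\dot\chi\|_{BUC(\BBR)}|g|_{\BBF_3(a)}$. Applying this to $\psi$, $\psi'$ and $\psi''$ — legitimate since $\psi\in BUC^3(\BBR)$ — shows at once that $\psi$ maps $Y$ into $Y$ and that $[D\psi(g)]h:=\psi'(g)h$ defines an element of $\CL(Y)$. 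For the remainder I would use Taylor's formula with integral remainder together with the algebra property of $Y$ (applied twice) and Lemma~\ref{F-tilde}\,b) for $\psi''$:
\[
\psi(g+h)-\psi(g)-\psi'(g)h=h^2\!\int_0^1(1-\si)\psi''(g+\si h)\,d\si,\qquad \|\psi(g+h)-\psi(g)-\psi'(g)h\|_Y\le C(\|g\|_Y)\,\|h\|_Y^2
\]
for $\|h\|_Y\le1$, which is $o(\|h\|_Y)$. Continuity — indeed local Lipschitz continuity — of $g\mapsto D\psi(g)$ follows in the same manner from $\psi'(g)-\psi'(\bar g)=(g-\bar g)\int_0^1\psi''(\bar g+\tau(g-\bar g))\,d\tau$ and the algebra property.

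For c), I would decompose $\Phi^d=\ph\circ q\circ\BE^d$, where $\BE^d(\Bu,h):=\BE(\Bu^d,h)=\BD_\xi(\Bu^d)-\CE(\Bu^d,h)$ as in \eqref{E} and $q(\BM):=\sum_{i,j}M_{ij}^2$. By Lemma~\ref{lemm:em}\,b) the linear maps $(\Bu,h)\mapsto\BD_\xi(\Bu^d)$ and $\Bu\mapsto D_N\Bu^d$ are bounded into $X^{N\times N}$, resp.\ $X^N$, and by Lemma~\ref{lemm:em}\,d) the map $h\mapsto\nabla^\pr h$ is bounded linear into $X$; since $X$ is a Banach algebra, $\CE^d$ is a bounded bilinear map and $q$ a bounded quadratic map, hence both are $C^\infty$, and composing with $\ph$ via a) and the chain rule yields $\Phi^d\in C^1(\BBE_1(a)\times\BBE_4(a),X)$. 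Part d) runs identically with $X$ replaced by $Y=\wt{\BBF}_3(a)$; the one new point is that the trace $\ga_0\BE(\Bu^d,h)$ takes values in $Y^{N\times N}$. This holds because the trace to $\bdry$ of the first spatial derivatives of $\Bu\in\BBE_1(a)$ lies in $\BBF_3(a)$ — this is precisely the trace space appearing in the boundary data $\Bg$ of Proposition~\ref{lemm:lin} — while the embedding $\BBE_1(a)\hookrightarrow BUC(J,BUC^1(\dws))$ of Lemma~\ref{lemm:em}\,b) supplies the missing $BUC(J,BUC(\tws))$-control, so these traces in fact lie in $\wt{\BBF}_3(a)$; similarly $\nabla^\pr h\in\wt{\BBF}_3(a)$ by Lemma~\ref{lemm:em}\,d) together with $\BBE_4(a)\hookrightarrow L_p(J,W_p^{3-1/p}(\tws))\cap W_p^{1/2-1/(2p)}(J,H_p^2(\tws))$. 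Then $\ga_0\BE^d$ is bounded linear plus bounded bilinear into $Y^{N\times N}$ (using that $Y$ is a multiplication algebra, Lemma~\ref{F-tilde}\,a)), hence $C^\infty$, $q$ is $C^\infty$ on $Y$, and composing with $\psi$ via b) finishes the proof.

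The genuinely delicate step is b), and with it d): establishing that the superposition operator $g\mapsto\psi(g)$ is continuously Fr\'echet differentiable on an anisotropic Slobodeckii-type space that must simultaneously carry sup-norm control, and then transporting this regularity through the trace of $\nabla\Bu$. It is exactly to make this work that the auxiliary space $\wt{\BBF}_3(a)$ was introduced: the $BUC(J,BUC(\tws))$-norm built into it is what powers the multiplication-algebra and chain-rule estimates, whereas $\BBF_3(a)$ by itself — having no $L_\infty$-control — is not stable under composition with nonlinearities. Everything else is routine multilinear algebra combined with the embeddings of Lemmas~\ref{lemm:em} and \ref{F-tilde}.
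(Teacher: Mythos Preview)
Your proof is correct and follows essentially the same approach as the paper: parts a) and b) are handled via the mean value theorem and second-order Taylor expansion combined with the algebra property and Lemma~\ref{F-tilde}\,b), while c) and d) are obtained by the chain rule after verifying that $\BE(\Bu^d,h)$ and $\ga_0\BE(\Bu^d,h)$ land in $BUC(J,BUC(\dws))^{N\times N}$ and $\wt{\BBF}_3(a)^{N\times N}$ respectively. One small inaccuracy in your closing commentary: $\BBF_3(a)$ \emph{does} embed into $BUC(J,BUC(\tws))$ by Lemma~\ref{lemm:em}\,c); the real reason $\wt{\BBF}_3(a)$ is needed is that composition with a nonlinearity $\psi$ with $\psi(0)\neq0$ destroys the $L_p$-integrability built into $\BBF_3(a)$, not the $L_\infty$-control.
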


 \begin{proof}
a) We show first that the mapping $\ph$ is Fr\'echet differentiable. To this end, let $f,\bar{f}\in Z:= BUC(J,BUC(\dws))$. Then
\begin{align*}
				\ph(f+\bar{f})-\ph(f)-\dot{\ph}(f)\bar{f}
					=&\int_0^1(\dot{\ph}(f+\te\bar{f})-\dot{\ph}(f))\,d\te\bar{f},
			\end{align*}
which implies 
\begin{align*}
\|\ph(f+\bar{f})-\ph(f)-\dot{\ph}(f)\bar{f}\|_{Z}/\|\bar{f}\|_{Z}  \leq \int_0^1\|\dot{\ph}(f+\te\bar{f})-\dot{\ph}(f)\|_{Z}\,d\te.
\end{align*}
Since $\dot{\ph}\in BUC(\BBR)$, the term on the right hand side above tends to $0$ as $\|\bar{f}\|_{BUC(J,BUC(\dws))}\to 0$. Thus 
$[D\ph(f)]\bar{f}=\dot{\ph}(f)\bar{f}$. Next, we show the continuity of the Fr\'echet derivative at $f_0\in Z$.
For $h\in Z$ we have  
\begin{eqnarray*}
\|D\ph(f_0+h)-D\ph(f_0)\|_{\CL(Z)}  &=& \sup_{\|f\|_{Z}=1} \|[D\ph(f_0+h)]f-[D\ph(f_0)]f\|_{Z} \\ 
				& =& \sup_{\|\bar{f}\|_{Z}=1} \|\dot{\ph}(f_0+h)f-\dot{\ph}(f_0)f\|_{Z} \\
				&\leq& \|\dot{\ph}(f_0+h)-\dot{\ph}(f_0)\|_{Z},
\end{eqnarray*}
which tends to $0$ as  $\|h\|_{Z}\to 0$ since $\dot{\ph}\in BUC(\BBR)$.\\
b) For  $f,\bar{f}\in\wt{\BBF}_3(a)$ we obtain 
\begin{align*}
				\psi(f+\bar{f})-\psi(f)-\dot{\psi}(f)\bar{f}
					=\int_0^1(1-\te)\ddot{\psi}(f+\te\bar{f})\bar{f}\bar{f}\,d\te.
\end{align*}
By Lemma \ref{F-tilde} a) and b)
\begin{align*}
				\|\ddot{\psi}(f+\te\bar{f})\bar{f}\|_{\wt{\BBF}_3(a)}
					\leq& C\{\|\ddot{\psi}\|_{BUC(\BBR)}+\|\dddot{\psi}\|_{BUC(\BBR)}|f+\te\bar{f}|_{\BBF_3(a)}\}\|\bar{f}\|_{\wt{\BBF}_3(a)}^2 \\
					\leq& C(1+\|f\|_{\wt\BBF_3(a)}+\te\|\bar{f}\|_{\wt\BBF_3(a)})\|\bar{f}\|_{\wt{\BBF}_3(a)}^2,
			\end{align*} 
which implies that $[D\psi(f)]\bar{f}=\dot{\psi}(f)\bar{f}$.
Next, in order to show  the continuity of the Fr\'echet derivative at $f_0\in\wt{\BBF}_3(a)$, let 
$h\in\wt{\BBF}_3(a)$. Then 
\begin{align*}
				\|D\psi(f_0+h)-D\psi(f_0)\|_{\CL(\wt{\BBF}_3(a))}
					=&\sup_{\|f\|_{\wt{\BBF}_3(a)}=1}\|[D\psi(f_0+h)]f-[D\psi(f_0)]f\|_{\wt{\BBF}_3(a)}\\					
					\leq& C\|\dot{\psi}(f_0+h)-\dot{\psi}(f_0)\|_{\wt\BBF_3(a)}
			\end{align*}
by Lemma \ref{F-tilde} a).
Since $\dot{\psi}\in BUC^2(\BBR)$, Lemma \ref{F-tilde} b) implies that 
$\dot{\psi}(f_0+h)$ and  $\dot{\psi}(f_0)$ are  in $\wt{\BBF}_3(a)$.
Taylor's formula and Lemma \ref{F-tilde} a) yield 
\begin{equation*}
				\|\dot{\psi}(f_0+h)-\dot{\psi}(f_0)\|_{\wt{\BBF}_3(a)}
					\leq C\int_0^1\|\ddot{\psi}(f_0+\te h)\|_{\wt{\BBF}_3(a)}\,d\te\|h\|_{\wt{\BBF}_3(a)}.
\end{equation*}
The latter terms  tends to $0$ as $\|h\|_{\wt{\BBF}_3(a)}\to0$, which completes the proof. \\
c) By Lemma \ref{lemm:em} b) and d), the mappings
\begin{equation}\label{0709_1}
(\Bu,h) \mapsto \BE^d(\Bu,h):\BBE_1(a)\times\BBE_4(a)\to Z^{N\times N}\enskip\mbox{as well as}\enskip
x \mapsto |x|^2:Z^{N\times N}\to Z 
\end{equation}
are continuously Fr\'echet differentiable. The chain rule thus yields that 
\begin{equation*}
			\BBE_1(a)\times\BBE_4(a)\to Z: 	(\Bu,h) \mapsto |\BE(\Bu^d,h)|^2
 \end{equation*}
is continuously Fr\'echet differentiable, too.
Applying  assertion a) and the chain rule again implies that $\Phi^d:\BBE_1(a)\times\BBE_4(a)\to Z$ is 
continuously Fr\'echet differentiable. \\
d) Note that for $(\Bu,h)\in \BBE_1(a)\times\BBE_4(a)$ and $i,j=1,\ldots,N$,  we obtain 
\begin{equation}\label{0707_1}
			\|(\ga_0\pa_i u_{j}^d,\pa_i h,\pa_i\pa_j h)\|_{\wt{\BBF}_3(a)}
					+\|(\ga_0\pa_i\Bu^d,\nabla h,\pa_i\nabla h)\|_{\BBF_3(a)}
				\leq C(a,p)\|z\|_{\BBE(a)}
		\end{equation}
In fact, Lemma \ref{F-tilde} b) and \cite[Theorem 4.5]{M-S} for $s=1/2$, $m=1$, and $\mu=1$ yield 
\begin{align*}
\|\ga_0\pa_i u_j^d\|_{\wt{\BBF}_3(a)}+\|\ga_0\pa_i\Bu^d\|_{\BBF_3(a)} 
				\leq&\|\ga_0\pa_i u_j^d\|_{BUC(J,BUC(\tws))}+\|\ga_0\pa_i\Bu^d\|_{\BBF_3(a)} \notag \\
				\leq& C(\|\pa_i u_j^d\|_{BUC(J,BUC(\uhs))}+\|\pa_i\Bu^d\|_{W_p^{1/2}(J,L_p(\uhs))\cap L_p(J,H_p^1(\uhs))}) \notag \\
				\leq& C\|\Bu\|_{\BBE_1(a)},
\end{align*}
which implies the required properties of $\Bu$. Concerning $h$, the desired properties  follow from the definition of $\BBE_4(a)$ and Lemmas \ref{lemm:em} and \ref{lemm:em_2}).
By (\ref{0707_1}) and Lemma \ref{F-tilde} a) the mappings 
\begin{equation*}
			(\Bu,h)\mapsto \ga_0\BE(\Bu^d,h):\BBE_1(a)\times\BBE_4(a)\to\wt{\BBF}_3(a)^{N\times N} \mbox{ and }
			\ x\mapsto|x|^2: \wt{\BBF}_3(a)^{N\times N}\to \wt{\BBF}_3(a)
		\end{equation*}
are continuously Fr\'echet differentiable. By the chain rule
\begin{equation*}
			(\Bu,h)\mapsto |\ga_0\BE(\Bu^d,h)|^2:\BBE_1(a)\times\BBE_4(a)\to \wt{\BBF}_3(a)
\end{equation*}
is continuously Fr\'echet differentiable. Finally, combining this with assertion b), the chain rule yields the assertion. 
\end{proof}

For $\Phi \in BUC^1(\BBR)$, $u$ as in  \eqref{pm} and $i,j,k,l,m,q,r=1,\ldots,N$ and $d=1,2$ we now introduce the functions   
\begin{align*}
				\Phi_{ijklmqr}^d(\Bu,h)
					&:=\ph(|\BE(\Bu^d,h)|^2)E_{ij}(\Bu^d,h)E_{kl}(\Bu^d,h)\pa_m\pa_q u_r^d, \\
				\La_{ijklpqr}^d(\Bu,h)
					&:=\ph(|\BE(\Bu^d,h)|^2)E_{ij}(\Bu^d,h)E_{kl}(\Bu^d,h)\CF_{mq}(h)u_r^d, \\
				\Phi_{ijk}^d(\Bu,h)
					&:=(\ph(|\BE(\Bu^d,h)|^2)-\ph(0))\pa_i\pa_j u_k^d ,\\
				\La_{ijk}^d(\Bu,h)
					&:=(\ph(|\BE(\Bu^d,h)|^2)-\ph(0))\CF_{ij}(h)u_k^d,
\end{align*}
as well as for $\psi\in BUC^3(\BBR)$, $u$ as in \eqref{pm}, $i,j,k=1,\dots,N$ and $d=1,2$ the functions 
\begin{align*}
				\Psi_{i}^d(\Bu,h)
					&:=\{\psi(|\ga_0\BE(\Bu^d,h)|^2)-\psi(0)\}\ga_0\pa_i\Bu^d, \\
				\Te_{ij}^d(\Bu,h)
					&:=\psi(|\ga_0\BE(\Bu^d,h)|^2)(\ga_0\pa_i\Bu^d)\pa_j h, \\
				\Xi_{ijk}^d(\Bu,h)
					&:=\psi(|\ga_0\BE(\Bu^d,h)|^2)(\ga_0\pa_i\Bu^d)\pa_j h\pa_k h.
			\end{align*}

\vspace{.2cm}\noindent
\begin{coro}\label{coro}
Let $N+2<p<\infty$, $a>0$ and $J=(0,a)$. Then the following assertions hold true. 
\begin{enumerate}[{\rm a)}]
\item Assume that $i,j,k,l,m,q,r=1,\ldots,N$ and $d=1,2$. For $\phi \in BUC^1(\BBR)$ and  $u$ as in  \eqref{pm} the mappings
\begin{align*}
				\Phi_{ijklmqr}^d,\La_{ijklmqr}^d,\Phi_{ijk}^d,\La_{ijk}^d:
					\BBE_1(a)\times\BBE_4(a)\to L_p(J,L_p(\dws))
\end{align*}
are continuously Fr\'echet differentiable. Moreover, their values  and their Fr\'echet derivatives at $(\Bu,h)=(0,0)$ vanish.
\item Let $i,j,k=1,\dots,N$ and $d=1,2$. For $\psi\in BUC^3(\BBR)$ and $u$ as in \eqref{pm} the functions 
\begin{align*}
				\Psi_{i}^d,\Te_{ij}^d,\Xi_{ijk}^d: \BBE_1(a)\times\BBE_4(a)\to \BBF_3(a)
			\end{align*}	
are continuously Fr\'echt differentiable.
Moreover, their values and Fr\'echet derivatives at $(\Bu,h)=(0,0)$ vanish.
\end{enumerate}
\end{coro}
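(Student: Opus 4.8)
The plan is to reduce everything to the building-block results already in hand, namely Lemma \ref{F-tilde} (multiplication algebra structure and the chain-rule estimate for $\wt\BBF_3(a)$ and $\BBF_3(a)$), Lemma \ref{Frechet1} (Fr\'echet differentiability of $\Phi^d$, $\Psi^d$ and of the maps $(\Bu,h)\mapsto\BE(\Bu^d,h)$, $(\Bu,h)\mapsto\ga_0\BE(\Bu^d,h)$), together with the chain and product rules for Fr\'echet differentiable maps recalled just before the statement. The point is that each of the functions $\Phi^d_{ijklmqr},\dots,\Xi^d_{ijk}$ is a finite product of factors each of which is already known to be a continuously Fr\'echet differentiable map from $\BBE_1(a)\times\BBE_4(a)$ into one of the spaces $BUC(J,BUC(\dws))$, $\wt\BBF_3(a)$, $\BBF_3(a)$, or $L_p(J,L_p(\dws))$; so the whole argument is the bookkeeping of which product rule applies into which target space, plus the verification that values and derivatives vanish at the origin.

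For part a), fix the target space $Y=L_p(J,L_p(\dws))$. First I would record the three elementary multiplication inequalities that make the product rule applicable with this $Y$: $BUC(J,BUC(\dws))\cdot Y\hookrightarrow Y$ (bounded $\times$ $L_p$), and — using Lemma \ref{lemm:em}\,(\ref{lemm:em_1}) which gives $\BBE_1(a)\hookrightarrow BUC(J,BUC^1(\dws))^N$ — that each of the factors $E_{ij}(\Bu^d,h)$, $\pa_m\pa_q u_r^d$ (only after it is multiplied against bounded factors; $\pa_m\pa_q u^d_r$ alone lies in $L_p(J,L_p)$), $\CF_{mq}(h)u^d_r$, and $\ph(|\BE(\Bu^d,h)|^2)-\ph(0)$ is either bounded or lands in $L_p(J,L_p)$. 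Concretely: $\ph(|\BE(\Bu^d,h)|^2)$ is bounded and $C^1$ in $(\Bu,h)$ by Lemma \ref{Frechet1}\,c); $E_{ij}(\Bu^d,h)$ is bounded and $C^1$ by Lemma \ref{lemm:em}\,(\ref{lemm:em_1})\&(\ref{lemm:em_2}) applied to \eqref{E}; $\pa_m\pa_q u^d_r\in L_p(J,L_p(\dws))$ depends linearly and boundedly, hence $C^1$, on $\Bu$; $\CF_{mq}(h)$ is a sum of products of $BUC$-functions of $h$ (through $D_jD_kh$, $D_jh$, etc., all controlled by Lemma \ref{lemm:em}\,(\ref{lemm:em_2})) with second-order derivative operators, so $\CF_{mq}(h)u^d_r$ is again a bounded-coefficient combination producing an $L_p(J,L_p)$ element, $C^1$ in $(\Bu,h)$; and $\ph(|\BE(\Bu^d,h)|^2)-\ph(0)$ vanishes at $(\Bu,h)=0$. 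Iterating the product rule then gives continuous Fr\'echet differentiability of all four families into $Y$. For the vanishing at the origin: every one of $\Phi^d_{ijklmqr},\La^d_{ijklmqr}$ contains the factor $E_{kl}(\Bu^d,h)$ (indeed two $E$-factors) which vanishes at $(\Bu,h)=0$ and whose derivative at $0$ is the constant map $0$ (since $\BE(0,0)=\BD_\xi(0)=0$ and $D\BE(0,0)$ is a bounded linear map landing on objects that are multiplied against the other vanishing $E$-factor); for $\Phi^d_{ijk},\La^d_{ijk}$ the factor $\ph(|\BE(\Bu^d,h)|^2)-\ph(0)$ vanishes at $0$ and has derivative $2\dot\ph(0)E(\cdot)|_{0}[\,\cdot\,] = 0$ there. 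Using the product rule for the derivative, any summand of $[DF(0)]\bar z$ retains at least one factor evaluated at $0$ which is zero, so $[DF(0)]=0$; and $F(0)=0$ likewise.

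For part b), the target space is $\BBF_3(a)$, and the relevant multiplication facts are exactly Lemma \ref{F-tilde}\,c), $\|fg\|_{\BBF_3(a)}\le C\|f\|_{\BBF_3(a)}\|g\|_{\wt\BBF_3(a)}$, together with Lemma \ref{F-tilde}\,a) ($\wt\BBF_3(a)$ a multiplication algebra). By Lemma \ref{Frechet1}\,d), $\Psi^d(\Bu,h)=\psi(|\ga_0\BE(\Bu^d,h)|^2)$ is $C^1$ from $\BBE_1(a)\times\BBE_4(a)$ into $\wt\BBF_3(a)$; by \eqref{0707_1} (i.e. the estimates inside the proof of Lemma \ref{Frechet1}\,d)) the traces $\ga_0\pa_i u^d_j$ and the factors $\pa_j h$, $\pa_k h$ lie in $\wt\BBF_3(a)$, while $\ga_0\pa_i\Bu^d\in\BBF_3(a)$, both depending linearly and boundedly — hence $C^1$ — on $z=(\Bu,h)$. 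Thus $\Te^d_{ij}=\Psi^d\cdot(\ga_0\pa_i\Bu^d)\cdot(\pa_j h)$ is a product of a $\wt\BBF_3(a)$-valued $C^1$ map, a $\BBF_3(a)$-valued $C^1$ map, and another $\wt\BBF_3(a)$-valued $C^1$ map; applying the product rule with $Y=\BBF_3(a)$, $Z=\wt\BBF_3(a)$ twice (using Lemma \ref{F-tilde}\,c) for the $\BBF_3$-$\wt\BBF_3$ pairing and Lemma \ref{F-tilde}\,a) to absorb the extra $\wt\BBF_3$-factor) yields $\Te^d_{ij}\in C^1(\BBE_1(a)\times\BBE_4(a),\BBF_3(a))$; the same reasoning handles $\Xi^d_{ijk}$ (one more $\pa_k h\in\wt\BBF_3(a)$ factor) and $\Psi^d_i$ (here the $\wt\BBF_3$-factor is $\psi(|\ga_0\BE|^2)-\psi(0)$). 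For the vanishing at $(\Bu,h)=(0,0)$: in $\Psi^d_i$ the factor $\psi(|\ga_0\BE(\Bu^d,h)|^2)-\psi(0)$ is $0$ at the origin with zero derivative there (as $\ga_0\BE(0,0)=0$), so $\Psi^d_i$ and its derivative vanish; in $\Te^d_{ij}$ and $\Xi^d_{ijk}$ the factor $\ga_0\pa_i\Bu^d$ is linear and vanishes at $0$, and every summand of the product-rule formula for the derivative at $0$ keeps one factor ($\ga_0\pa_i\Bu^d$, or $\pa_j h$, or $\pa_k h$) evaluated at $0$, hence equals $0$.

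The main obstacle — and really the only non-mechanical point — is keeping track of \emph{which} product space the multiplication estimate is being used in at each step, since some factors ($E_{kl}$, $\ga_0\pa_i\Bu^d$) are merely bounded or merely $L_p$ while the "good" factors ($\ph(|\BE|^2)$, $\psi(|\ga_0\BE|^2)$, $\ga_0\pa_i u^d_j$, $\pa_j h$) carry the algebra structure; one has to be sure that in assembling a product such as $\ph(|\BE|^2)E_{ij}E_{kl}\,\pa_m\pa_q u^d_r$ the order of multiplication is chosen so that at every intermediate stage one genuinely has a bounded (or $\wt\BBF_3$) factor times the distinguished factor living in $Y=L_p(J,L_p)$ (resp. $\BBF_3(a)$). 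Once the grouping is fixed this is entirely routine, and the vanishing-at-the-origin claims follow from the product rule simply because each of these functions was deliberately built to contain a factor that is zero at $(\Bu,h)=(0,0)$ with vanishing first variation there.
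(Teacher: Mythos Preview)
Your proposal is correct and follows essentially the same strategy as the paper's proof: assemble each map as a finite product of factors already known (via Lemma~\ref{Frechet1} and Lemma~\ref{lemm:em}) to be continuously Fr\'echet differentiable into $BUC(J,BUC(\dws))$, $\wt\BBF_3(a)$, $\BBF_3(a)$, or $L_p(J,L_p(\dws))$, and then invoke the product rule with the appropriate pairing ($Y=L_p(J,L_p)$, $Z=BUC$ in part~a); $Y=\BBF_3(a)$, $Z=\wt\BBF_3(a)$ in part~b)). Your treatment is in fact slightly more explicit than the paper's, which only spells out $\Phi^d_{ijklmqr}$ and $\Psi^d_i$ and leaves the rest as analogous; the vanishing-at-the-origin argument via the product-rule expansion is likewise the same.
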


\begin{proof}
a) We only prove the assertion for $\Phi_{ijklmqr}^d$; the remaining assertions can be proved in an analogously.  
By (\ref{0709_1}), Lemma \ref{Frechet1} a) and the product rule, the function 
\begin{equation*}
				(\Bu,h)\mapsto \ph(|\BE(\Bu^d,h)|^2)E_{ij}(\Bu^d,h)E_{kl}(\Bu^d,h):
					\BBE_1(a)\times\BBE_4(a)\to BUC(J,BUC(\dws))
			\end{equation*}
is continuously Fr\'echet differentiable. Moreover, 
\begin{equation*}
				\Bu\mapsto \pa_m\pa_q u_r^d:
					\BBE_1(a)\to L_p(J,L_p(\dws))
			\end{equation*}
is continuously Fr\'echet differentiable, which combined with the above assertion and the product rule applied to the situation  $X=\BBE_1(a)\times\BBE_4(a)$,
$Y=L_p(J,L_p(\dws))$ and $Z=BUC(J,BUC(\dws))$ implies that  $\Phi_{ijklmqr}^d:\BBE_1(a)\times\BBE_1(a)\to L_p(J,L_p(\dws))$ is continuously Fr\'echet differentiable.
In addition, it is clear that $\Phi_{ijklmqr}^d(0,0)=0$ and $D\Phi_{ijklmqr}^d(0,0)=0$. \\
b) We only prove the assertion for  $\Psi_i^d$. Since 
\begin{equation}\label{0709_3}
				\Bu\mapsto\ga_0\pa_i\Bu^d:
					\BBE_1(a)\to\BBF_3(a)\ \text{or $\wt{\BBF}_3(a)$}
\end{equation}
is continuously Fr\'echet differentiable, it follows  from \eqref{0707_1} that 
\begin{equation*}
				\Bu\mapsto\psi(0)\ga_0\pa_i\Bu^d
					:\BBE_1(a)\to\BBF_3(a)
			\end{equation*}
is continuously Fr\'echet differentiable, too.
On the other hand by Lemma \ref{Frechet1} b), \eqref{0709_3} and the product rule applied to 
$X=\BBE_1(a)\times\BBE_4(a)$, $Y=\BBF_3(a)$ and $Z=\wt{\BBF}_3(a)$, the mapping  
\begin{equation*}
(\Bu,h)\mapsto\psi(|\ga_0\BE(\Bu^d,h)|^2)\ga_0\pa_i\Bu^d:
\BBE_1(a)\times\BBE_4(a)\to\BBF_3(a)
\end{equation*}
is continuously Fr\'echet differentiable. Finally, $\Psi_i^d(0,0)=0$ and thanks to product rule 
\begin{align*}
				D\Psi_i^d(\Bu,h) 
					=\ga_0\pa_i\Bu^d [D(\psi(|\ga_0\BE(\Bu^d,h)|^2)-\psi(0))]
						+(\psi(|\ga_0\BE(\Bu^d,h)|^2)-\psi(0))[D(\ga_0\pa_i\Bu^d)],
			\end{align*}
which implies that $D\Psi_i^d(0,0)=0$. The proof is  complete .
\end{proof}

\section{The nonlinear problem}\label{sec:nonl}

Let us recall from \eqref{defN} that for  $(\Bu,\te,\pi,h)\in\BBE(a)$ the nonlinear mapping $\BN$  was  defined as 
\begin{equation*}
	\BN(\Bu,\te,\pi,h) =(\BF(\Bu,\te,h),F_d(\Bu,h),\BG(\Bu,\pi,h),G_h(\Bu,h)).
\end{equation*}
We start this section by examining properties of the nonlinear mapping $\BN$.  

\begin{lemm}\label{N}
Let $N+2<p<\infty$, $a>0$ and $r>0$.
Suppose that $\mu_d(s)\in C^3([0,\infty))$ for $d=1,2$ and in addition
that $\rho_1>0$, $\rho_2>0$, $\ga_a\geq 0$ and $\si>0$ are constants. Then
\begin{equation*}
		\BN\in C^1(B_{\BBE(a)}(r),\BBF(a)),\ \BN(0)=0\ \text{and $D\BN(0)=0$.}
\end{equation*}
\end{lemm}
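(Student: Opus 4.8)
The plan is to verify the three claimed properties ($\BN$ maps into $\BBF(a)$, is $C^1$ on the ball $B_{\BBE(a)}(r)$, and has vanishing value and derivative at the origin) component by component, using the decomposition $\BN=(\BF,F_d,\BG,G_h)$ together with the explicit formulas from Section~\ref{sec:red}. For the first and third components one works in $\BBF_1(a)=L_p(J,L_p(\dws))^N$ and $\BBF_3(a)$, which are precisely the target spaces handled by Corollary~\ref{coro}~a) and b); for $F_d$ one must land in $\BBF_2(a)=H_p^1(J,\dot H_p^{-1}(\ws))\cap L_p(J,H_p^1(\dws))$, and for $G_h$ in $\BBF_4(a)$, both of which are multiplication-algebra type spaces by Lemma~\ref{lemm:em}~a).

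First I would treat the easy components. The term $G_h(\Bu,h)=-\Bu^\pr\cdot\nabla^\pr h$ is bilinear in $(\ga_0\Bu^\pr,\nabla^\pr h)$; using $\BBE_1(a)\hookrightarrow BUC(J,BUC^1(\dws))$ and $\BBE_4(a)\hookrightarrow BUC^1(J,BUC^1(\tws))\cap BUC(J,BUC^2(\tws))$ from Lemma~\ref{lemm:em}, together with the fact that $\BBF_4(a)$ is a multiplication algebra, one gets $G_h\in C^1$ with $G_h(0,0)=0$ and $DG_h(0,0)=0$ (a bounded bilinear map is smooth with vanishing differential at the origin). The term $F_d(\Bu,h)=D_N(\Bu^\pr\cdot\nabla^\pr h)$ is handled the same way, the extra $D_N$ causing no trouble since the relevant norms control one more derivative; the $\dot H_p^{-1}$-in-time component is the only slightly delicate point and is dealt with exactly as in \cite[Section~6]{P-S1}, writing $F_d$ as a spatial divergence of an $L_p$-in-time, $L_p$-in-space quantity. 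Then I would turn to $\BF$ and $\BG$: inspecting the formulas for $F_i$, $G_j$, $G_N$, $\CA_i$, $\CB_j$, $\CB_N$, every summand is either (i) a product of factors each controlled by $\BBE_1(a)$ or $\BBE_4(a)$ in a $BUC$-norm times one genuinely $L_p$ factor (for the $\BF$ part), or (ii) built from the trace expressions $\ga_0\BE(\Bu^d,h)$ and $\nabla^\pr h$ times viscosity functions of $|\ga_0\BE|^2$ (for the $\BG$ part). Each such summand is, up to relabeling indices and up to harmless linear terms, exactly one of the functions $\Phi^d_{ijklmqr},\La^d_{ijklmqr},\Phi^d_{ijk},\La^d_{ijk}$ (for $\BF$) or $\Psi^d_i,\Te^d_{ij},\Xi^d_{ijk}$ (for $\BG$) introduced before Corollary~\ref{coro}, applied with $\ph=\mu_d$ or $\psi=\mu_d$, which lie in $C^3([0,\infty))$ and hence (after a standard cutoff/extension, legitimate because on $B_{\BBE(a)}(r)$ the argument $|\BE|^2$ stays in a bounded set) in $BUC^1(\BBR)$ resp.\ $BUC^3(\BBR)$. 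Corollary~\ref{coro} then gives continuous Fr\'echet differentiability of each summand together with vanishing of its value and derivative at $(\Bu,h)=(0,0)$; the pressure-dependent terms $(D_ih)D_N\te$ in $F_i$ and $\jump{\te}D_jh$ in $G_j$ are bilinear in $(h,\te)\in\BBE_4(a)\times\BBE_2(a)$ resp.\ $(h,\pi)\in\BBE_4(a)\times\BBE_3(a)$ and are handled by the product rule plus Lemma~\ref{lemm:em} and Lemma~\ref{F-tilde}. Summing finitely many $C^1$ maps with vanishing value and differential at the origin yields $\BN\in C^1(B_{\BBE(a)}(r),\BBF(a))$, $\BN(0)=0$, $D\BN(0)=0$.

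The main obstacle is bookkeeping rather than analysis: one must check that \emph{every} term appearing in $\BF$, $F_d$, $\BG$, $G_h$ genuinely fits one of the templates covered by Corollary~\ref{coro} (or is an obvious bilinear map into the right space), in particular that the curvature terms $\CF_{jk}(h)$, $\CH(h)$ and the lower-order pieces of $\BE(\Bu,h)$ do not spoil the structure, and that the argument of each $\mu_d$ indeed remains in a compact interval on $B_{\BBE(a)}(r)$ so that replacing $\mu_d$ by a $BUC^k(\BBR)$ function is harmless. I would also remark that all the embedding constants from Lemma~\ref{lemm:em} and Lemma~\ref{F-tilde} are finite for fixed $a>0$, so no smallness of $a$ is needed here; the smallness will only enter later when contracting. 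Once the template-matching is done, the $C^1$ statement and the vanishing of $\BN(0)$ and $D\BN(0)$ follow immediately, completing the proof.
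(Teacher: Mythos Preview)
Your proposal is correct and follows essentially the same route as the paper: the paper also reduces to the templates of Corollary~\ref{coro} for $\CA$ and $\CB$, and defers the remaining terms ($F_d$, $G_h$, the pressure terms, and $\CH(h)$) to \cite[Proposition~6.2]{P-S1} and \cite[Proposition~4.1]{P-S2}. Your remark about extending $\mu_d\in C^3([0,\infty))$ to a $BUC^k(\BBR)$ function via a cutoff (using that $|\BE|^2$ stays bounded on $B_{\BBE(a)}(r)$) is a point the paper leaves implicit but which is indeed needed to invoke Corollary~\ref{coro}.
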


\begin{proof}
We treat here in detail only the terms $\BCA(\Bu,h)$ and $\BCB(\Bu,h)$
which are defined in Section \ref{sec:red}. The remaining terms may be treated as in
\cite[Proposition 6.2]{P-S1} and \cite[Proposition 4.1]{P-S2}.\\ 
{\it The term  $\BCA(\Bu,h)$}: \\
Let $(\Bu,\te,\pi,h)\in B_{\BBE(a)}(r)$ and recall that $\CA_i$ is given for $i=1,\dots,N$ by 
\begin{align*}
			\CA_i(\Bu,h)
				=&\sum_{j,k,l=1}^N(A_{i}^{j,k,l}(\BE(\Bu,h))-A_{i}^{j,k,l}(0))(\pa_j\pa_k u_l+\pa_j\pa_l u_k) \\
					&+\sum_{j,k,l=1}^N(A_{i}^{j,k,l}(\BE(\Bu,h))-A_{i}^{j,k,l}(0))(\CF_{jk}(h)u_l+\CF_{jl}(h) u_k), \notag
		\end{align*}
where $\CF_{jk}(h)$ are defined as in (\ref{deriv}) and
$A_{i}^{j,k,l}$ as in Section \ref{sec:red} by 
\begin{align*}
			A_{i}^{j,k,l}(\BE(\Bu,h))
				&=\chi_{\lhs} A_{1,i}^{j,k,l}(\BE(\Bu,h))+\chi_{\uhs} A_{2,i}^{j,k,l}(\BE(\Bu,h)), \\
			A_{d,i}^{j,k,l}(\BE(\Bu,h))
				&=\frac{1}{2}\Big(2\dot{\mu}_d(|\BE(\Bu,h)|^2)E_{ij}(\Bu,h)E_{kl}(\Bu,h)+\mu_d(|\BE(\Bu,h)|^2)\de_{ik}\de_{jl}\Big)
		\end{align*}
for $d=1,2$. Observe that $A_{i}^{j,k,l}(\BE(\Bu,h))$ may be represented as 
\begin{equation*}
A_i^{j,k,l}(\BE(\Bu,h))
=\frac{1}{2}\sum_{d=1}^2
\Big(2\dot{\mu}_d(|\BE(\Bu^d,h)|^2)E_{ij}(\Bu^d,h)E_{kl}(\Bu^d,h)+\mu_d(|\BE(\Bu^d,h)|^2)\de_{ik}\de_{jl}\Big),
\end{equation*}
and thus
\begin{align*}
\CA_i(\Bu,h)
=&\sum_{d=1}^2\sum_{j,k,l=1}^N\dot{\mu}_d(|\BE(\Bu^d,h)|^2)E_{ij}(\Bu^d,h)E_{kl}(\Bu^d,h)(\pa_j\pa_k u_l^d+\pa_j\pa_l u_k^d) \\
&+\frac{1}{2}\sum_{d=1}^2\sum_{j,k,l=1}^N(\mu_d(|\BE(\Bu^d,h)|^2)-\mu_d(0))\de_{ik}\de_{jl}(\pa_j\pa_k u_l^d+\pa_j\pa_l u_k^d) \\
&+\sum_{d=1}^2\sum_{j,k,l=1}^N\dot{\mu}_d(|\BE(\Bu^d,h)|^2)E_{ij}(\Bu^d,h)E_{kl}(\Bu^d,h)(\CF_{jk}(h)u_l^d+\CF_{jl}(h) u_k^d) \\
&+\frac{1}{2}\sum_{d=1}^2\sum_{j,k,l=1}^N(\mu_d(|\BE(\Bu^d,h)|^2)-\mu_d(0))\de_{ik}\de_{jl}(\CF_{jk}(h)u_l^d+\CF_{jl}(h) u_k^d)
\end{align*}
for $i=1,\dots,N$. By Corollary \ref{coro} a) 
\begin{equation*}
\BCA\in C^1(B_{\BBE(a)}(r),\BBF_1(a)),\ \BCA(0,0)=0\ \text{and $D\BCA(0,0)=0$}.
\end{equation*}
{\it The term  $\BCB(\Bu,h)$:}\\
Let $(\Bu,\te,\pi,h)\in B_{\BBE(a)}(r)$. By Lemma \ref{lemm:em} b) and d),
each term appearing in $\BCB(\Bu,h)$ is continuous with respect to the space variable.
In particular, this implies 
\begin{equation*}
			\ga_0\dot{\mu}_d(|\BE(\Bu^d,h)|^2))=\dot{\mu}_d(|\ga_0\BE(\Bu^d,h)|^2)),
				\quad\ga_0\{(\pa_N u_N^d)(\pa_j h)\}=(\ga_0\pa_N u_N^d)(\ga_0\pa_j h).
		\end{equation*}
Thus, $\BCB(\Bu,h)$ may be rewritten as
\begin{align*}
			\CB_{j}(\Bu,h)
				=& -\sum_{d=1}^2(-1)^{d}\mu_d(|\ga_0\BE(\Bu^d,h)|^2)(\ga_0\pa_Nu_N^d)\pa_jh \\
					& +\sum_{d=1}^2(-1)^{d}(\mu_d(|\ga_0\BE(\Bu^d,h)|^2)-\mu_d(0))(\ga_0\pa_N u_j^d+\ga_0\pa_j u_N^d) \\
					& -\sum_{d=1}^2\sum_{k=1}^{N-1}(-1)^{d}\mu_d(|\ga_0\BE(\Bu^d,h)|^2)(\ga_0\pa_j u_k^d+\ga_0\pa_k u_j^d)\pa_k h \\
					& +\sum_{d=1}^2\sum_{k=1}^{N-1}(-1)^{d}\mu_d(|\ga_0\BE(\Bu^d,h)|^2)
		(\pa_kh \ga_0\pa_N u_j^d+\pa_j h\ga_0\pa_Nu_k^d)\pa_k h,\\
			\CB_N(\Bu,h)
				=& 2\sum_{d=1}^2(-1)^{d}(\mu_d(|\ga_0\BE(\Bu^d,h)|^2)-\mu_d(0))\ga_0\pa_N u_N^d \\
					& +\sum_{d=1}^2\sum_{k=1}^{N-1}(-1)^{d}\mu_d(|\ga_0\BE(\Bu^d,h)|^2)(\pa_k h)^2\ga_0\pa_N u_N^d \\
					& -\sum_{d=1}^2\sum_{k=1}^{N-1}(-1)^{d}\mu_d(|\ga_0\BE(\Bu^d,h)|^2)(\ga_0\pa_Nu_k^d+\ga_0\pa_k u_N^d)\pa_k h.
		\end{align*}
These representation  combined with Corollary \ref{coro} b) yields 
\begin{equation*}
			\BCB\in C^1(B_{\BBE(a)}(r),\BBF_3(a)),\ \BCB(0,0)=0\ \text{and $D\BCB(0,0)=0$},
		\end{equation*}
which implies the assertion. 
\end{proof}

Finally, we return to the nonlinear problem (\ref{eq:flat}). We define the space of initial data $\BBI$ by 
\begin{equation*}
	\BBI:= W_p^{2-2/p}(\dws)^N\times W_p^{3-2/p}(\tws),
\end{equation*}
and define also for $z\in \BBE(a)$ and $(\Bu_0,h_0)\in\BBI$ the mapping $\Phi$ by   
\begin{equation*}
	\Phi(z):=L^{-1}(\BN(z),\Bu_0,h_0).
\end{equation*}
Here $L$ is defined by the left-hand side of the linear problem (\ref{eq:lin}) with $\nu=\mu(0)$.
Observe that the invertiblility of $L$ is guaranteed by Proposition 
\ref{lemm:lin} since  $\mu_i(0)>0$ for $i=1,2$ by assumption and
$\BN(z)\in\BBF(a)$ for  $\Bz\in\BBE(a)$ by Lemma \ref{N}.
The following result shows that the problem \eqref{eq:flat} on the fixed domain admits a unique strong solution provided the 
data $u_0$ and $h_0$ are sufficiently small in their corresponding norms.

\begin{prop}\label{thm:fixed}
Let $N+2<p<\infty$ and $a>0$. Suppose that $\mu_i \in C^3([0,\infty))$ for $i=1,2$ and that 
\begin{equation*}
		\rho_i>0,\ \mu_i(0)>0,\ \ga_a \geq0\ \text{and $\si>0$}.
	\end{equation*}
Then there exist positive constants $\ep_0$ and $\de_0$ (depending on $a$ and $p$),
such that system  \eqref{eq:flat} admits a unique solution $(\Bu,\te,h)$ in $B_{\BBE(a)}(\de_0)$ provided that the initial data $(\Bu_0,h_0)\in\BBI$ satisfy the compatibility conditions
\begin{align}\label{comp_2}
		\jump{\mu(|\BE(\Bu_0,h_0)|^2)\BE(\Bu_0,h_0)\Bn_0-\{\Bn_0\cdot\mu(|\BE(\Bu_0,h_0)|^2)\BE(\Bu_0,h_0)\Bn_0\}\Bn_0}
			=0 \quad\text{on $\bdry$}, \\
		\jump{\te_0}
			=\jump{\Bn_0\cdot\mu(|\BE(\Bu_0,h_0)|^2)\BE(\Bu_0,h_0)\Bn_0}
				+(\jump{\rho}\ga+\si\De^\pr)h_0-\si\CH(h_0)\quad\text{on $\bdry$}, \notag \\
		\di\Bu_0
			=F_d(\Bu_0,h_0)\quad\text{in $\dws$}, \quad
		\jump{\Bu_0}
			=0\quad\text{on $\bdry$} \notag
 	\end{align}
as well as  the smallness condition $\|(\Bu_0,h_0)\|_{\BBI}<\ep_0$.
\end{prop}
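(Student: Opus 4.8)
The plan is to solve \eqref{eq:flat} by a fixed-point argument for the map $\Phi(z)=L^{-1}(\BN(z),\Bu_0,h_0)$ on a small ball $B_{\BBE(a)}(\de_0)$, using that $L$ is invertible with maximal regularity constant (Proposition \ref{lemm:lin}) and that $\BN$ is $C^1$ with $\BN(0)=0$, $D\BN(0)=0$ (Lemma \ref{N}). First I would deal with the initial data. The compatibility conditions \eqref{comp_2} are precisely the trace compatibilities required in Proposition \ref{lemm:lin} rewritten in the reduced variables; one must check that $(\Bu_0,h_0)\in\BBI$ satisfying \eqref{comp_2} produces, after choosing $\te_0$ as prescribed, admissible data $(\BN(z),\Bu_0,h_0)$ for the linear problem, i.e. that $\di\Bu_0=F_d(\Bu_0,h_0)=f_d(0)$ and $\jump{\Bu_0}=0$ hold and that the tangential-stress compatibility at $t=0$ is met. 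The subtle point is that the inhomogeneity $\BN(z)$ itself has a trace at $t=0$ that must match; here one uses that the reduced nonlinearity evaluated at $t=0$ depends only on $(\Bu_0,h_0)$, so \eqref{comp_2} is exactly the condition that makes $(g_j(0)$, $\di\Bu_0$, $\jump{\te_0})$ consistent. Thus $\Phi$ is well-defined on $\BBE(a)$.

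Next I would set up the contraction. Fix $\ep_0$ small and let $z_*:=L^{-1}(0,\Bu_0,h_0)$ be the solution of the linear problem with zero right-hand side and the given initial data; by Proposition \ref{lemm:lin}, $\|z_*\|_{\BBE(a)}\le C_0\|(\Bu_0,h_0)\|_{\BBI}<C_0\ep_0$. For $z\in B_{\BBE(a)}(\de_0)$ write $\Phi(z)=z_*+L^{-1}(\BN(z),0,0)$. Since $\BN\in C^1$ with $\BN(0)=0$ and $D\BN(0)=0$, given $\eta>0$ there is $\de_0>0$ with $\|D\BN(z)\|\le\eta$ for $\|z\|_{\BBE(a)}\le\de_0$; hence for $z_1,z_2\in B_{\BBE(a)}(\de_0)$,
\begin{equation*}
\|\Phi(z_1)-\Phi(z_2)\|_{\BBE(a)}
\le \|L^{-1}\|\,\|\BN(z_1)-\BN(z_2)\|_{\BBF(a)}
\le C\eta\,\|z_1-z_2\|_{\BBE(a)},
\end{equation*}
and $\|\BN(z)\|_{\BBF(a)}\le C\eta\|z\|_{\BBE(a)}\le C\eta\de_0$, so
\begin{equation*}
\|\Phi(z)\|_{\BBE(a)}\le C_0\ep_0+C\eta\de_0.
\end{equation*}
Choosing first $\eta$ with $C\eta\le 1/2$, then $\de_0$ accordingly from the $C^1$-bound, and finally $\ep_0$ with $C_0\ep_0\le\de_0/2$, the map $\Phi$ sends $B_{\BBE(a)}(\de_0)$ into itself and is a $1/2$-contraction there. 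The Banach fixed-point theorem yields a unique fixed point $z=(\Bu,\te,h)$ in $B_{\BBE(a)}(\de_0)$, which is the desired solution of \eqref{eq:flat}; uniqueness within the ball is immediate, and uniqueness of $\pi$ follows from $\pi=\jump{\te}$.

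The main obstacle is verifying the two quantitative hypotheses feeding the contraction, and these are exactly what Sections \ref{sec:linear}--\ref{sec:nonl} were built to supply: the \emph{$a$-independent} (or at least controlled) maximal regularity estimate for $L^{-1}$ on $J=(0,a)$ from Proposition \ref{lemm:lin}, and the estimate $\|\BN(z_1)-\BN(z_2)\|_{\BBF(a)}\le C(\|z_1\|,\|z_2\|)\|z_1-z_2\|_{\BBE(a)}$ with a constant that tends to $0$ as the radii shrink. The latter is the genuinely hard analytic input for this class of non-Newtonian fluids, because the viscosity terms $\BCA,\BCB$ involve the compositions $\mu_d(|\BE(\Bu^d,h)|^2)$, $\dot\mu_d(|\BE(\Bu^d,h)|^2)$ and their traces; Lemma \ref{N}, via Corollary \ref{coro} and the mapping properties of $\wt\BBF_3(a)$ in Lemma \ref{F-tilde}, is precisely the statement that $\BN\in C^1(B_{\BBE(a)}(r),\BBF(a))$ with vanishing value and derivative at the origin, so the Lipschitz-with-small-constant bound follows from the mean value inequality $\|\BN(z_1)-\BN(z_2)\|_{\BBF(a)}\le\sup_{0\le t\le1}\|D\BN(tz_1+(1-t)z_2)\|\,\|z_1-z_2\|_{\BBE(a)}$ together with continuity of $D\BN$ at $0$. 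Once these two ingredients are in place the fixed-point step is routine, so the real work has already been carried out in the preceding sections, and this proof merely assembles it. Finally, one remarks that the solution obtained on the fixed domain transports back, via $\Te$ and $\Te_*$, to a solution $(\Bv,\pi,\Ga)$ of \eqref{NS} in the regularity class of Theorem \ref{thm:main}, which completes the passage from Proposition \ref{thm:fixed} to the main theorem.
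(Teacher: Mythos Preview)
Your overall strategy is exactly the paper's: define $\Phi(z)=L^{-1}(\BN(z),\Bu_0,h_0)$, use Lemma \ref{N} to make $\|D\BN(z)\|$ small on a small ball, and apply the contraction principle. The self-mapping and contraction estimates you write are essentially the same as those in the paper's proof.

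There is, however, one technical slip in your presentation. You split $\Phi(z)=z_*+L^{-1}(\BN(z),0,0)$ with $z_*:=L^{-1}(0,\Bu_0,h_0)$. But $L^{-1}$ from Proposition \ref{lemm:lin} is only defined on data satisfying the compatibility conditions listed there; for zero right-hand side these read $\di\Bu_0=0$ and $\jump{\mu(0)(D_N u_{0j}+D_j u_{0N})}=0$, which the given non-Newtonian initial data do \emph{not} satisfy in general (they satisfy $\di\Bu_0=F_d(\Bu_0,h_0)$ etc.). Likewise $L^{-1}(\BN(z),0,0)$ would require $F_d(\Bu,h)|_{t=0}=0$ and $G_j(\Bu,\pi,h)|_{t=0}=0$, which need not hold for an arbitrary $z$ in the ball. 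So neither summand is individually well-defined; only their formal sum satisfies the correct compatibility. The paper avoids this by never splitting: it estimates $\|\Phi(z)\|_{\BBE(a)}\le \|L^{-1}\|\big(\|\BN(z)\|_{\BBF(a)}+\|(\Bu_0,h_0)\|_{\BBI}\big)$ directly and then uses $\|\BN(z)\|=\|\BN(z)-\BN(0)\|\le\sup\|D\BN\|\,\|z\|$. Dropping your splitting and arguing as the paper does removes the issue with no change to the rest of your argument. (Your final sentence about transporting back via $\Te,\Te_*$ belongs to the proof of Theorem \ref{thm:main}, not to this proposition.)
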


\begin{rema}
The compatibility conditions (\ref{comp_2}) are equivalent to 
\begin{align*}
		-\jump{\mu(0)(\pa_N u_j+\pa_j u_N)}=G_j(\Bu_0,\jump{\te_0},h_0)\quad\text{on $\bdry$},\\
		\jump{\te_0}-2\jump{\mu(0)\pa_N u_{0N}}-(\jump{\rho}\ga_a+\si\De^\pr)h_0=G_N(\Bu_0,h_0)\quad\text{on $\bdry$}, \\
		\di u_0= F_d(\Bu_0,h_0)\quad\text{in $\dws$},\quad\jump{\Bu_0}=0\quad\text{on $\bdry$}
	\end{align*}
for  $j=1,\dots,N-1$ and $\BG=(G_1,\dots,G_N)^T$ defined as in Section \ref{sec:red}.
\end{rema}

\begin{proof}
Observe  that $D\BN$ is continuous and $D\BN(0)=0$ by Lemma \ref{N}.
Thus, we may choose  $\de_0>0$ small enough such that
\begin{equation*}
		\sup_{z\in B_{\BBE(a)}(2\de_0)}\|D\BN(z)\|_{\CL(B_{\BBE(a)}(r),\BBF(a))}
			\leq\frac{1}{2\|L^{-1}\|_{\CL(\BBF(a)\times\BBI,\BBE(a))}},
	\end{equation*}
where $r>0$ is a sufficiently large number. For $z\in B_{\BBE(a)}(\de_0)$, the mean value theorem implies 
\begin{align*}
			\|\Phi(z)\|_{\BBE(a)}
				&\leq\|L^{-1}\|_{\CL(\BBF(a)\times\BBI,\BBE(a))}\{\|\BN(z)\|_{\BBF(a)}+\|(\Bu_0,h_0)\|_{\BBI}\} \\
				&=\|L^{-1}\|_{\CL(\BBF(a)\times\BBI,\BBE(a))}\{\|\BN(z)-\BN(0)\|_{\BBF(a)}+\ep_0\} \\
				&\leq \|L^{-1}\|_{\CL(\BBF(a)\times\BBI,\BBE(a))}
					\{(\sup_{\bar{z}\in B_{\BBE(a)}(\de_0)}\|D\BN(\bar{z})\|_{\CL(B_{\BBE(a)}(r),\BBF(a))})\|z\|_{\BBE(a)}+\ep_0\} \\
				&<\frac{\de_0}{2}+\ep_0\|L^{-1}\|_{\CL(\BBF(a)\times\BBI,\BBE(a))}.
		\end{align*}
Choosing $\ep_0$ in such way  that $0<\ep_0<\de_0/(2\|L^{-1}\|_{\CL(\BBF(a)\times\BBI,\BBE(a))})$, we obtain $\|\Phi(z)\|_{\BBE(a)}<\de_0$. Hence, 
$\Phi$ is a mapping from $B_{\BBE(a)}(\de_0)$ into itself. 

Let $z_1,z_2\in B_{\BBE(a)}(\de_0)$. Noting that $\Phi(z_1)-\Phi(z_2)=L^{-1}(\BN(z_1)-\BN(z_2),0,0)$, we obtain by the mean value theorem
\begin{align*}
			\|\Phi(z_1)-\Phi(z_2)\|_{\BBE(a)}
				&\leq\|L^{-1}\|_{\CL(\BBF(a)\times\BBI,\BBE(a))}\|\BN(z_1)-\BN(z_2)\|_{\BBF(a)}\\
				&\leq\|L^{-1}\|_{\CL(\BBF(a)\times\BBI,\BBE(a))}
					(\sup_{\bar{z}\in B_{\BBE(a)}(2\de_0)}\|D\BN(\bar{z})\|_{\CL(B_{\BBE(a)}(r),\BBF(a))}))\|z_1-z_2\|_{\BBE(a)} \\
				&\leq\frac{1}{2}\|z_1-z_2\|_{\BBE(a)},
		\end{align*}
which implies that $\Phi$ is a contraction mapping on $B_{\BBE(a)}(\de_0)$. The contraction principle yields the existence of a  unique solution of 
(\ref{eq:flat}) in $B_{\BBE(a)}(\de_0)$.  
\end{proof}

\vspace{.2cm}\noindent
\text{{\it Proof of Theorem} \ref{thm:main}.} 
Note that the compatibility conditions of Theorem \ref{thm:main} are satisfied if and only if (\ref{comp_2}) is satisfied. The 
mapping $\Te_{h_0}$ given by
\begin{equation*}
		\Te_{h_0}(\xi^\pr,\xi_N)
			:=(\xi^\pr,\xi_N+h_0(\xi^\pr))\quad\text{for $(\xi^\pr,\xi_N)\in\dws$}
\end{equation*}	
defines for $h_0\in W_p^{3-2/p}(\tws)$ a $C^2$-diffeomorphism from $\dws$ onto $\Om(0)$ with inverse $\Te_{h_0}^{-1}(x^\pr,x_N)=(x^\pr,x_N-h_0(x^\pr))$.
Thus there exists a constant $C(h_0)$ such that 
\begin{equation*}
		C(h_0)^{-1}\|\Bv_0\|_{W_p^{2-2/p}(\Om_0)^N}
			\leq \|\Bu_0\|_{W_p^{2-2/p}(\dws)^N}
			\leq C(h_0)\|\Bv_0\|_{W_p^{2-2/p}(\Om_0)^N}.
\end{equation*}
Hence, the smallness condition in Theorem \ref{thm:main} implies the smallness condition in Proposition \ref{thm:fixed}. 
Proposition \ref{thm:fixed} then yields a unique solution $(\Bu,\te,h)\in B_{\BBE(a)}(\de_0)$ of (\ref{eq:flat}).
Finally, setting
\begin{equation*}
		(\Bv,\pi)
			=(\Te_*\Bu,\Te_*\pi)
			=(\Bu\circ\Te^{-1},\pi\circ\Te^{-1}),
\end{equation*}
where $\Te_*$ is defined as in \eqref{push}, we obtain a unique solution $(\Bv,\pi,h)$ of the original problem \eqref{NS} with the regularities stated in 
Theorem \ref{thm:main}. The proof is complete. 

\rightline{$\Box$}

\end{document}